\newcommand{\C}{{\mathbb C}}
\newcommand{\N}{{\mathbb N}}
\newcommand{\R}{{\mathbb R}}
\newcommand{\mc}[1]{\mathcal{#1}}
\newcommand{\Tc}{\mc{T}}
\newcommand{\curl}{{\rm curl}\,}
\newcommand{\loc}{{\rm loc}\,}
\renewcommand{\div}{{\rm div}\,}
\newcommand{\supp}{\operatorname{supp\,}}
\renewcommand{\phi}{{\varphi}}
\newcommand{\pd}{\partial} \newcommand{\pa}{\partial}
\renewcommand{\d}{\, {\rm d}}
\let\al=\alpha
\let\e=\varepsilon
\let\z=\zeta
\let\om=\omega
\let\Om=\Omega
\let\f=\frac
\newtheorem{theorem}{Theorem}[section]
\newtheorem{proposition}[theorem]{Proposition}
\newtheorem{lemma}[theorem]{Lemma}
\theoremstyle{remark}
\newtheorem{remark}[theorem]{Remark}
\numberwithin{equation}{section}
\title{Uniqueness for the 2-D Euler equations on domains with corners}
\author{Christophe Lacave, Evelyne Miot, Chao Wang}
\def\adrese{
\begin{description}
\item[C. Lacave:] Universit\'e Paris-Diderot (Paris 7), Institut de Math\'ematiques de Jussieu - Paris Rive Gauche, UMR 7586 - CNRS, B\^atiment Sophie Germain, Case 7012, 75205 PARIS Cedex 13, France.\\
Email: \texttt{lacave@math.jussieu.fr}\\
Web page: \texttt{http://www.math.jussieu.fr/$\sim$lacave/}
\item[E. Miot:] Universit\'e Paris-Sud, D\'epartement de math\'ematiques, B\^atiment 425, 91405 Orsay, France.\\
Email: \texttt{Evelyne.Miot@math.u-psud.fr}\\
Web page: \texttt{http://math.u-psud.fr/$\sim$miot/}
\item[C. Wang:] Universit\'e Paris-Diderot (Paris 7), Institut de Math\'ematiques de Jussieu - Paris Rive Gauche, UMR 7586 - CNRS, B\^atiment Sophie Germain, Case 7012, 75205 PARIS Cedex 13, France.\\
Beijing International Center For Mathematical Research, Peking University, 100871, P. R. China.\\
Email: \texttt{wangc@math.jussieu.fr}
\end{description}
}
\begin{document}

\date{\today}
\maketitle

\begin{abstract}
We prove uniqueness of the solution of the Euler equations with bounded vorticity for bounded simply connected planar domains with corners forming acute angles. Our strategy consists in mapping such domains on the unit disk via a biholomorphism. We then establish log-Lipschitz regularity for the resulting push-forward of the velocity field, which leads to uniqueness thanks to a Gronwall estimate involving the Lagrangian trajectories on the unit disk. 

\end{abstract}

\section{Introduction}

In this paper, we consider the motion of an ideal incompressible fluid in a bounded open set $\Omega\subset \R^2$. The velocity of the fluid $u=(u_1(t,x_1,x_2),u_2(t,x_1,x_2))$ satisfies the 2-D Euler equations:
\begin{equation}\label{E}
\begin{cases}
\pa_t u+u\cdot \nabla u +\nabla p=0,\\
\div u=0,
\end{cases}
\end{equation}
where $p:\Omega\to \R$ denotes the pressure. We supplement \eqref{E} with the initial data and an impermeability condition at the boundary $\pa \Om$:
\begin{equation}\label{E2}
u(0,\cdot)=u_0(\cdot), \quad u\cdot n|_{\pa \Om}=0.
\end{equation}

A natural quantity in the setting of fluids is the vorticity, defined by
\[
\omega=\curl u= \pa_1 u_2-\pa_2 u_1,
\]
which satisfies the transport equation
\begin{equation*}
\pa_t \omega+u\cdot \nabla \omega =0,\quad \omega(0,\cdot)=\omega_0(\cdot)=\curl u_0(\cdot).
\end{equation*}
There is a large variety of results for the 2-D Euler equations in the case of smooth domains or in the full plane. Global existence and uniqueness of smooth solutions was obtained in 1933 by Wolibner  \cite{wolibner} (see also Kato \cite{Kato} and Temam \cite{Temam}). In \cite{yudo}, Yudovich established global existence and uniqueness of the weak solution with uniformly bounded and integrable vorticity (see also Bardos \cite{Bardos} for a different approach concerning the existence part). Yudovich's arguments actually yield uniqueness for a larger class of initial vorticities (with a logarithmic-type blow up of the $L^p$ norms as $p\to\infty$), see \cite{yudo2} and more recently \cite{Bernicot1,Bernicot2,Vishik}.  Global weak solutions exist with very few regularity assumptions on the vorticity \cite{Delort,DiPernaMajda} or on the boundary \cite{BDT, GL,taylor}, while the above-mentioned uniqueness results hold for $C^{1,1}$ boundaries and Yudovich-type data.  
In domains with obtuse angles, uniqueness was obtained by Lacave  \cite{CL} for single-signed and bounded  vorticity. Recently, a uniqueness result without any sign condition has been established by Bardos, Di Plinio and Temam \cite{BDT} when the domain is a rectangle.

{\it The purpose of the present paper is to establish uniqueness for the 2-D Euler equations with bounded vorticity for a class of domains $\Omega\notin C^{1,1}$, more precisely:}
\begin{description}
\item[(H)] $\Omega$ is a bounded and simply connected open subset of $\R^2$, with $\partial \Omega$ belonging to $C^{2,\alpha}$ ($\alpha>0$) except in a finite number of points $\{ x_{k}\}_{k=1,\dots,N}$ where $\partial \Omega$ is a corner of angle $\theta_{k}\in (0,\pi/2]$.
\end{description}
This means that $\Om$ is locally parameterized closed to $x_k$ by $\{ z= x_k+r e^{i\theta}, \ 0<\theta<\theta_k,\:0< r< r_0\}$ for some $r_0>0$.


Existence of a global weak solution, with $u\in L^\infty (\R_+;L^2(\Omega))$ and $\omega  \in L^\infty(\R_+\times \Omega)$, of the 2-D Euler equations \eqref{E}-\eqref{E2} was proved by Taylor \cite{taylor} for convex domains (see also \cite{BDT} for further refined results) and by G\'erard-Varet and Lacave \cite{GL} for more general (possibly not convex) domains.

Here we will establish uniqueness of the weak solution:
\begin{theorem}\label{thm:main}
Assume that $\Omega$ satisfies (H). Let  $u_{0}$ be a vector field such that
\begin{equation*}
\curl u_{0} \in L^\infty(\Om), \quad \div u_0=0,\quad u_0\cdot n|_{\pa \Om}=0 .
\end{equation*}
Then the Euler equations \eqref{E}-\eqref{E2} have a unique global weak solution  such that
\begin{equation*}
u\in L^\infty (\R_+;L^2({\Om})),\quad \curl u \in L^\infty(\mathbb{R}_+\times \Om).
\end{equation*}
\end{theorem}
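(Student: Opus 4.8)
The plan is to transport the problem to the unit disk $\mb D$ through the Riemann map and to run a Yudovich-type argument there; existence being already known by \cite{taylor,GL}, we focus on uniqueness. By the Riemann mapping theorem there is a biholomorphism $T:\Om\to\mb D$. Since $\pa\Om\in C^{2,\al}$ away from the corners, $T$ and $T^{-1}$ extend regularly up to the boundary on the smooth arcs (Kellogg--Warschawski), while near each corner Lehman-type expansions give $T(z)-T(x_k)\sim c_k(z-x_k)^{\pi/\theta_k}$, hence $|(T^{-1})'(w)|\sim|w-w_k|^{\theta_k/\pi-1}$ near the image $w_k=T(x_k)$. Writing $u=\nabla^\perp\psi$ with $-\D\psi=\om$ and $\psi|_{\pa\Om}=0$, the conformal invariance of the Green function gives $\psi=\Psi\circ T$, where $-\D\Psi=\wt\om$ on $\mb D$ with $\wt\om(w)=|(T^{-1})'(w)|^2\,\om(T^{-1}(w))$. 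The Lagrangian trajectories $X_t$ of $u$ on $\Om$ correspond, via $\wt X_t:=T\circ X_t\circ T^{-1}$, to trajectories on $\mb D$ driven by the push-forward field
\[
b(w):=\big(DT\,u\big)\!\circ T^{-1}(w)=\big|T'(T^{-1}(w))\big|^2\,\nabla^\perp\Psi(w),
\]
and along these trajectories the bounded quantity $\om\circ T^{-1}$ is simply transported; moreover $\wt X_t$ preserves the finite measure $\d\nu:=|(T^{-1})'|^2\d w=T_*(\d z)$.

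The heart of the proof is the regularity statement: \emph{$b$ is log-Lipschitz on $\overline{\mb D}$, uniformly in $t$}, i.e.\ $|b(w_1)-b(w_2)|\le C|w_1-w_2|\log\f{1}{|w_1-w_2|}$ for $|w_1-w_2|$ small, with $C=C(\Om,\norm{\om_0}_{L^\infty})$. Away from the corners this is the classical log-Lipschitz bound for a velocity with bounded vorticity, the conformal factors being smooth and bounded there. Near a corner one uses the explicit disk Biot--Savart kernel $\nabla^\perp G_{\mb D}(w,w')=O(|w-w'|^{-1})$ to write $b(w)=|(T^{-1})'(w)|^{-2}\int_{\mb D}\nabla^\perp G_{\mb D}(w,w')\,|(T^{-1})'(w')|^2\,(\om\circ T^{-1})(w')\,\d w'$. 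The weight $|(T^{-1})'(w')|^2\sim|w'-w_k|^{2(\theta_k/\pi-1)}$ is singular, but the competing prefactor $|(T^{-1})'(w)|^{-2}\sim|w-w_k|^{2(1-\theta_k/\pi)}$ vanishes at the corner; since $\om\circ T^{-1}\in L^\infty$, a careful splitting of the integral shows $b(w_k)=0$, $|b(w)|\lesssim|w-w_k|\log\f1{|w-w_k|}$, and more generally that $b$ is log-Lipschitz near $w_k$. \textbf{This is the main obstacle}, and it is exactly here that the acuteness hypothesis $\theta_k\le\pi/2$, i.e.\ $\pi/\theta_k\ge2$, enters: it is the borderline exponent for which the two competing powers combine into a log-Lipschitz (hence Osgood) modulus; for obtuse angles the prefactor vanishes too slowly and only a Hölder modulus survives, which is why a sign condition was needed in \cite{CL}.

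With the log-Lipschitz bound in hand, uniqueness follows by a Grönwall--Osgood argument on the disk. Given two solutions $u^1,u^2$ with the same data, let $b^1,b^2$ and $\wt X^1,\wt X^2$ be the associated push-forward fields and disk flows, and set $\rho(t):=\int_{\mb D}|\wt X^1_t(w)-\wt X^2_t(w)|\,\d\nu(w)$. Differentiating and splitting
\[
b^1(\wt X^1)-b^2(\wt X^2)=\big(b^1(\wt X^1)-b^1(\wt X^2)\big)+\big(b^1-b^2\big)(\wt X^2),
\]
the first term is controlled by the log-Lipschitz bound on $b^1$ together with the concavity of $r\mapsto r\log(1/r)$ and Jensen's inequality, while the second is controlled, using that $\om\circ T^{-1}\in L^\infty$ is transported and that $\wt X^2$ preserves $\nu$, by the same weighted singular-integral estimates as above. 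Both contributions are bounded by $C\rho\log(1/\rho)$, so $\rho$ satisfies an Osgood differential inequality with $\rho(0)=0$, which forces $\rho\equiv0$. Hence $\wt X^1=\wt X^2$, the transported vorticities coincide, and therefore $u^1=u^2$. Throughout, the constants are uniform in time because $\norm{\om(t)}_{L^\infty}=\norm{\om_0}_{L^\infty}$ is conserved.
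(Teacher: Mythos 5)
You follow exactly the route of the paper: push the velocity forward to the unit disk (your $b$ is precisely the paper's $U=D\Tc(\Tc^{-1}(\cdot))\,u(\Tc^{-1}(\cdot))$), prove a log-Lipschitz bound for it with acuteness entering through the exponent $2\al_k-1\geq 0$, and close uniqueness with an Osgood--Gronwall estimate on the disk flows; your functional $\rho(t)$ is, after a change of variables, the derivative of the paper's $f(t)$. The genuine gap is the sentence ``along these trajectories the bounded quantity $\om\circ T^{-1}$ is simply transported.'' For a \emph{weak} solution the vorticity equation holds only in the sense of distributions, and the Lagrangian identity $\om(t)=X(t,\cdot)_\#\om_0$ --- which your argument uses both when it represents $b^1-b^2$ through $\om_0$ composed with the two flows and when it deduces $u^1=u^2$ from $\wt X^1=\wt X^2$ --- is not automatic: it must be proved for \emph{every} weak solution in the stated class, and this is where most of the work of Section \ref{sect 4} lies. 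The paper proves it (Proposition \ref{prop:lagrangian-formulation}) by extending $\psi$, $u$, $\om$ across $\pa\Om$, showing that the extended vorticity solves a linear transport equation on $\R_+\times\R^2$ with a field $\bar u\in L^\infty(\R_+;W^{1,p}(\R^2))$ (Lemma \ref{lemma:transport-full}; the boundary terms are killed using the cutoffs $\eta_\e,\chi_\rho$, the tangency Lemma \ref{est:K4} and the log-Lipschitz bound), and then invoking DiPerna--Lions uniqueness \cite{dip-lions} for the linear transport equation. Without this step your Gronwall argument only shows that two \emph{Lagrangian} solutions coincide; it does not yield uniqueness of weak solutions.

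Two further steps are asserted rather than justified. First, global existence of the disk flow requires showing that trajectories never reach $\pa D$ in finite time; the paper obtains this from Lemma \ref{est:K4} together with the differential inequality $\bigl|\frac{\d}{\d t}|Y(t,y)|\bigr|\leq Ch(1-|Y(t,y)|)$, integrated in Osgood form. Second, your term $(b^1-b^2)(\wt X^2)$ is \emph{not} controlled by ``the same weighted singular-integral estimates'': after writing $u_i$ by Biot--Savart along the flows, what is needed is $\int_D|K_D(z,y_1)-K_D(z,y_2)|\d z\leq Ch(|y_1-y_2|)$, i.e.\ the kernel difference integrated in its first variable with the increment in the second; since $K_D$ is not symmetric (because of the image term $z^*$), this is not the classical log-Lipschitz lemma, and the paper proves it separately as the second (and original) statement of Lemma \ref{est:K3}. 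These gaps are fixable --- the paper fixes them --- but together with the full proof of the log-Lipschitz estimate (Proposition \ref{log-lip}, Section \ref{sect 5}), which you correctly identified as the main obstacle but only sketched, they constitute the real technical content of the proof.
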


We now  give a few words on the main techniques involved in the previous proofs by Yudovich \cite{yudo} of uniqueness and the new ingredients used in the present paper.

The usual argument uses the regularity of the velocity $u=\nabla^\perp \Delta^{-1}\omega$ when $\omega$ is a bounded function. When  $\partial\Omega\in C^{1,1}$, the Calder\'on-Zygmund inequality (see e.g. \cite{Stein}) implies that the solution of
\begin{equation}\label{LP}
\Delta \psi = \omega \text{ on } \Omega, \quad \psi \vert_{\partial\Omega} = 0
\end{equation}
verifies for $p\in (1,\infty)$:
\begin{equation}\label{CZ}
\| D^2 \psi \|_{L^p(\Omega)} \leq C(p,\Omega) \| \omega \|_{L^p(\Omega)}.
\end{equation}
Moreover, there exists $C(\Omega)$ independent of $p$ such that for any $p\in [2,\infty)$, $C(p,\Omega)=C(\Omega)p$ in the above inequality. Hence, if $\omega \in L^\infty(\R_+\times \Omega)$, the Calder\'on-Zygmund inequality yields that the velocity $u=\nabla^\perp \psi$ belongs to $L^{\infty}(\R_+;W^{1,p}(\Omega))$ for any $p<\infty$. For a Gronwall type argument, we would need an estimate in $W^{1,\infty}$, which we almost have. In his uniqueness proof in \cite{yudo}, Yudovich used the fact that $C(p,\Omega)=C(\Omega)p$ in a crucial way to close the Gronwall estimate.

\medskip

Concerning non-smooth domains, the Calder\'on-Zygmund inequality \eqref{CZ} does not hold anymore. One striking counter-example was constructed by Jerison and Kenig in \cite{kenig}: the authors exhibit a smooth function $\omega$, a bounded domain such that $\partial\Omega \in C^1$, for which the second derivative of $\psi$ (the solution of the Laplace problem \eqref{LP}) is not integrable. This remark is an important obstruction to apply this argument for proving uniqueness. 

\medskip

Even for non $C^{1,1}$ domain, the assumption (H) allows to apply elliptic theory for the Laplace problem \eqref{LP} in domains with corners. This problem has been extensively studied (see e.g. \cite{Kondra, grisvard,Mazya}), and the behavior of $\psi$ close to the corners is known. In particular, the velocity $u=\nabla^\perp \psi$ is not bounded if $\theta_k>\pi$, and if $\theta_k> \pi/2$, the velocity does not  belong to $\bigcap_{p<\infty} W^{1,p}(\Omega)$  (which is the required regularity for the Yudovich's argument). For large corners,  the sign condition in \cite{CL} enables to prove that the support of the vorticity never intersects the boundary, which is the place where the velocity is not regular (see \cite{CL} for more details). On the contrary, when $\theta_k\leq\pi/2$, it is proved in 
\cite{grisvard} that \eqref{CZ} holds for all $2\leq p<+\infty$ with a constant $C(p,\Omega)$ depending only on $p$ and $\Omega$.  However, the estimate \eqref{CZ} with $C(p,\Omega)=C(\Omega)p$ does not seem to be available in the literature. Bardos, Di Plinio and Temam \cite{BDT} noticed that the proof of Grisvard \cite{grisvard} yields $C(p,\Omega)= C(\Omega) p^2$, which is not sufficient to prove the uniqueness as Yudovich did. Thanks to a symmetry and reflexion argument (only valid for convex domains with angles $\theta_{k}=\pi/m_k$, $m_k\in \N, m_k\geq 2$), the authors establish a new BMO estimate yielding \eqref{CZ} with $C(p,\Omega)=C(\Omega)p$.

\medskip

Our strategy is quite different here since it does not rely on the estimate \eqref{CZ}. Instead  we first map $\Omega$ to the unit disk $D$ by a biholomorphism $\mathcal{T}$. Then we establish a log-Lipschitz  (also called almost-Lipschitz) type estimate for the push forward of $u$ to the unit disk:
\[
U(y,t):=D\mathcal{T}(\mathcal{T}^{-1}(y))u(\mathcal{T}^{-1}(y)),
\]
which leads to an alternative proof of uniqueness using the trajectories of the Lagrangian flow of $U$ as is done in e.g. \cite[Theo. 3.1, Chapter 2]{marchioro-pulvirenti}. We emphasize that in our proof no estimate for $\|\nabla u\|_{L^p}$, for $p\to +\infty$, nor the log-Lipschitz regularity for $u$ are needed. Only the log-Lipschitz regularity for $U$ is used.

\medskip

Let us mention that the assumption $\partial \Omega\in C^{2,\alpha}$ away from the corners is a restriction which comes from a complex analysis result (namely the Kellogg-Warschawski theorem, see Proposition \ref{prop T}). It would be more natural to assume $\partial \Omega\in C^{1,1}$ away from the corners, which should be obtain in future work. Another interesting direction would be  to check if our techniques can be apply to more general Yudovich data, i.e. extending \cite{Bernicot1,Bernicot2, Vishik,yudo2} (for instance) where $C(p,\Omega)=C(\Omega)p$ in \eqref{CZ} is needed.

\medskip

The remainder of this work is organized in four sections. In the following section, we introduce the Riemann mapping, namely the biholomorphism mapping $\Omega$ to the unit disk $D$ and we establish some estimates holding in the neighborhoods of the angles of $\Omega$.  We then recall the explicit formula of the Biot-Savart law (giving the velocity in terms of the vorticity) in terms of the  Riemann mapping. Section \ref{sect 3} is the central part of this paper: thanks to the previous explicit formula and to the estimates on the Riemann mapping, we establish a new log-Lipschitz estimate for $U$. As the proof of the log-Lipschitz estimate is rather technical and follows from the same idea as an $L^\infty$ estimate for $u$, we detail our techniques in a simpler setting and  we postpone the log-Lipschitz proof to the last section. The proof of Theorem \ref{thm:main} is performed in Section \ref{sect 4}, by means of a Gronwall estimate involving the flow trajectories.

\medskip

\textbf{Notations.} 
In the sequel,  $C$ will denote a constant depending only on the domain $\Omega$, the value of which can possibly change from a line to another.

\section{Conformal Mapping and Biot-Savart Law}

\subsection{Conformal mapping}
Let $\Omega$ be a bounded simply-connected open subset of $\R^2$. Identifying $\R^2$ with $\C$, the Riemann mapping theorem states that there exists a biholomorphism $\mathcal{T}$ mapping $\Om$ to the unit disk $D=B(0,1)$ and $\partial \Om$ to $\partial D$. In the same spirit of \cite[Theo. 2.1]{CL}, we prove the following
\begin{proposition} \label{prop T}
Assume that there exists $\alpha>0$ such that $\partial \Omega$ is $C^{2,\alpha}$ except at a finite number of points $x_{k}, k=1,\ldots,N,$ at which $\partial \Omega$ is a corner of angle $\theta_{k}$.  There exists $0<\delta< \frac{1}{3}\min_{i\neq j}(|x_{i}-x_{j}|, |\Tc(x_{i})-\Tc(x_{j})|)$, and there exist $K>1$ and $M>1$, depending only on $\Omega$ and $\delta$, such that
 \begin{itemize}
 \item for all $x\in \Omega\setminus (\cup_{k=1}^N B(x_{k},\delta))$, all $y\in D\setminus (\cup_{k=1}^N B(\Tc(x_{k}),\delta))$ and any $k=1,\dots, N$ we have:
 \begin{equation*}\begin{split}
&|  D\mathcal{T}(x) | +  |  D^2 \mathcal{T}(x) | \leq K, \\
 &|  D \mathcal{T}^{-1}(y) | \leq K ;
\end{split}
\end{equation*}
  \item for any $k =1,\dots, N$, all $x \in \Omega\cap B(x_{k},\delta)$, all $y\in D\cap B(\Tc(x_{k}),\delta)$  we have:
\begin{equation*}\begin{split}
&M^{-1} |x-x_k|^{\pi/\theta_{k}} \leq | \mathcal{T}(x)-\Tc(x_{k}) | \leq M |x-x_k|^{\pi/\theta_{k}},\\
&|  D\mathcal{T}(x) | \leq M |x-x_k|^{\pi/\theta_{k}-1},\qquad |  D^2 \mathcal{T}(x) | \leq M |x-x_k|^{\pi/\theta_{k}-2},\\
&M^{-1}|y- \Tc(x_{k}) |^{\theta_{k}/\pi} \leq |   \mathcal{T}^{-1}(y) -x_k | \leq M|y-\Tc(x_{k}) |^{\theta_{k}/\pi},\\
 &|  D \mathcal{T}^{-1}(y) | \leq M |y-\Tc(x_{k}) |^{\theta_{k}/\pi-1}.
\end{split}
\end{equation*}
\end{itemize}
\end{proposition}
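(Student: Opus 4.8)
The plan is to treat separately the region away from the corners and a neighborhood of each corner, the choice $\delta<\frac13\min_{i\neq j}(|x_i-x_j|,|\Tc(x_i)-\Tc(x_j)|)$ guaranteeing that the balls $B(x_k,\delta)$ and their images $B(\Tc(x_k),\delta)$ are pairwise disjoint, so that the two regimes never interfere. Away from the corners, $\partial\Omega$ is a genuinely $C^{2,\alpha}$ arc at distance $\geq\delta$ from every $x_k$, so the (local form of the) Kellogg--Warschawski theorem applies and provides an extension of $\mathcal{T}$ that is $C^{2,\alpha}$ up to these arcs, and likewise for $\mathcal{T}^{-1}$ near the corresponding arcs of $\partial D$. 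Combined with the interior holomorphy of $\mathcal{T}$ and a compactness argument on $\overline{\Omega}\setminus\cup_k B(x_k,\delta)$ and on $\overline{D}\setminus\cup_k B(\Tc(x_k),\delta)$, this produces a single constant $K$ bounding $|D\mathcal{T}|+|D^2\mathcal{T}|$ and $|D\mathcal{T}^{-1}|$ there, which settles the first item.

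For the corner estimates the key idea is to uniformize the corner by an explicit power map. Fixing $k$ and working in the variable $\zeta=z-x_k$, the domain $\Omega$ looks near $0$ like a curvilinear sector of opening $\theta_k$ whose two edges are $C^{2,\alpha}$. Setting $g_k(\zeta)=\zeta^{\pi/\theta_k}$ (with branch and rotation adapted to the sector) straightens the corner, sending the sector of angle $\theta_k$ to a domain $\Omega_k'$ whose boundary passes through $0$ with straight interior angle $\pi$. A direct computation on the arclength parametrization of each edge, using that a $C^{2,\alpha}$ edge is tangent to its ray up to a curvature term of size $O(t^2)$, shows that $\partial\Omega_k'$ is of class $C^{1,\theta_k/\pi}$ near $0$. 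Writing $\mathcal{T}=F_k\circ g_k$ near $x_k$, the map $F_k:=\mathcal{T}\circ g_k^{-1}$ is a biholomorphism from $\Omega_k'$ onto a neighborhood of $\Tc(x_k)$ in $D$ between two regular boundaries, so Kellogg--Warschawski applies once more: $F_k$ extends to a $C^{1,\theta_k/\pi}$ diffeomorphism up to the boundary near $0$, with $F_k'(0)\neq 0$ since a conformal map that is $C^1$ up to a regular boundary point has nonvanishing derivative there. The same holds for $F_k^{-1}$ near $\Tc(x_k)$.

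Granting this, the size and first-order estimates follow by the chain rule. From $\mathcal{T}(z)-\Tc(x_k)=F_k(\zeta^{\pi/\theta_k})-F_k(0)$ and $0<|F_k'(0)|<\infty$ one obtains the two-sided bound $M^{-1}|\zeta|^{\pi/\theta_k}\leq|\mathcal{T}(z)-\Tc(x_k)|\leq M|\zeta|^{\pi/\theta_k}$; differentiating and using $|g_k'(\zeta)|\simeq|\zeta|^{\pi/\theta_k-1}$ together with the boundedness of $F_k'$ yields $|D\mathcal{T}(z)|\leq M|\zeta|^{\pi/\theta_k-1}$. The inverse estimates are obtained symmetrically from $\mathcal{T}^{-1}(w)=x_k+(F_k^{-1}(w))^{\theta_k/\pi}$, using that $F_k^{-1}$ is $C^1$ up to the boundary with $(F_k^{-1})'(\Tc(x_k))\neq 0$, which gives $M^{-1}|w-\Tc(x_k)|^{\theta_k/\pi}\leq|\mathcal{T}^{-1}(w)-x_k|\leq M|w-\Tc(x_k)|^{\theta_k/\pi}$ and $|D\mathcal{T}^{-1}(w)|\leq M|w-\Tc(x_k)|^{\theta_k/\pi-1}$.

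I expect the bound on $|D^2\mathcal{T}|$ up to the boundary to be the main obstacle. Formally the chain rule gives
\[
D^2\mathcal{T}(z)=F_k''(g_k(\zeta))\,(g_k'(\zeta))^2+F_k'(g_k(\zeta))\,g_k''(\zeta),
\]
and the second term is exactly of the desired size $|\zeta|^{\pi/\theta_k-2}$ (note that $\pi/\theta_k-2\geq 0$ because $\theta_k\leq\pi/2$, so no blow-up is expected at the corner). However, since $\partial\Omega_k'$ is only $C^{1,\theta_k/\pi}$, the map $F_k''$ is not bounded up to the boundary, so the naive estimate of the first term is lossy near the edges approaching the corner; this reflects a genuine cancellation between the two terms, as near a smooth edge point at distance $r$ from $x_k$ the map $\mathcal{T}$ is $C^{2,\alpha}$ and $|D^2\mathcal{T}|\simeq r^{\pi/\theta_k-2}$ stays under control. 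The clean way to obtain the uniform bound is a rescaling argument at scale $r=|\zeta|$: because $\partial\Omega$ is $C^{2,\alpha}$, the blow-up of the corner at scale $r$ converges in $C^{2,\alpha}$ to the exact straight sector as $r\to 0$, so the Kellogg--Warschawski estimates for the rescaled maps hold with constants independent of $r$, and unscaling restores the factor $|\zeta|^{\pi/\theta_k-2}$. It is precisely this step that forces the hypothesis $\partial\Omega\in C^{2,\alpha}$ rather than merely $C^{1,1}$.
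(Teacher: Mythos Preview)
Your overall strategy---straighten each corner by the power map $\zeta\mapsto\zeta^{\pi/\theta_k}$ and then invoke Kellogg--Warschawski---is the paper's, but you are missing a significant simplification that removes the difficulty you identify for $D^2\mathcal{T}$.

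Under hypothesis (H) as spelled out in the introduction, the corner at $x_k$ is an \emph{exact} straight sector $\{x_k+re^{i\theta}:0<\theta<\theta_k,\ 0<r<r_0\}$, not a curvilinear one. The power map therefore sends each straight edge to a straight segment, and the straightened boundary is smooth near the former corner---not merely $C^{1,\theta_k/\pi}$ as you compute for a curved edge. The paper exploits this by applying $\varphi_1(z)=(z-x_1)^{\pi/\theta_1}$ \emph{globally} to $\Omega$ (for one corner; for $N$ corners one composes $\varphi_N\circ\cdots\circ\varphi_1$, each $\varphi_j$ being a diffeomorphism away from its own vertex), producing a bounded simply-connected $\widetilde\Omega$ that is globally $C^{2,\alpha}$. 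Then $f:=\varphi_1\circ\mathcal{T}^{-1}:D\to\widetilde\Omega$ is a global Riemann map between $C^{2,\alpha}$ domains, so Kellogg--Warschawski gives $f\in C^2(\overline D)$ with $|f'|$ bounded above and below. Differentiating $\mathrm{Id}=\mathcal{T}\circ\varphi_1^{-1}\circ f$ once yields the bound on $D\mathcal{T}$; differentiating twice yields the bound on $D^2\mathcal{T}$ directly from the boundedness of $f''$, with no rescaling needed. The estimates away from the corners are then read off from the same global relation, rather than from a separate local argument plus compactness.

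Your rescaling idea would become relevant for the more general curvilinear hypothesis (H$'$) discussed in the paper's remark following the proposition, but as written it is only a heuristic: making it rigorous requires uniform Kellogg--Warschawski constants along a family of domains converging in $C^{2,\alpha}$, which is true but not immediate. For the statement actually being proved (under (H)) it is unnecessary, and the paper's global-composition argument is both simpler and complete.
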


\begin{proof}
Even if the estimates concerning $\Tc^{-1}$ and $D\Tc^{-1}$ can be found in \cite[Theo. 3.9]{Pomm}, we provide here a self-contained proof.

\medskip

For simplicity,  we first assume  that there is only one corner with vertex $x_1$ and angle $\theta_1$.
 Thus $\partial\Omega$ is smooth except at $x_1$. 
In this proof we identify $\R^2$ and $\C$ to write $\Tc'(z), \Tc''(z),z\in \C$ instead of $D\Tc(x), D^2\Tc(x)$.

We introduce $\varphi_1(z) = (z-x_{1})^{\pi/\theta_1}$ and $\widetilde \Omega =\varphi_1(\Omega)$ a $C^{2,\alpha}$ bounded open set. Indeed, $\varphi_1$ locally sends the corner to the half plane. Then,  $f:=  \varphi_1\circ \Tc^{-1}$ is  a Riemann mapping from $D$ to $\widetilde \Omega$. By the Kellogg-Warschawski Theorem (see \cite[Theo. 3.6]{Pomm}), we infer that $f$ is $C^2$ on $\overline{D}$. Moreover, there exists a positive $C_1$ such that
$$C_1^{-1}\leq |f'(\zeta)|\leq C_1,\quad \forall \zeta\in \overline{D}$$
(see e.g. \cite[Theo. 3.5]{Pomm}).
Therefore, differentiating the relation ${\rm Id} = \Tc \circ \varphi^{-1}_1\circ f$, we compute
\begin{equation}\label{derive}
1= f'(\zeta) (\varphi^{-1}_1)'(f(\zeta))\Tc'(\varphi_1^{-1}\circ f(\zeta)).
\end{equation}
Since $(\varphi^{-1}_1)'(f(\zeta))=\frac{\theta_1}{\pi} f(\zeta)^{\theta_1/\pi-1}=\frac{\theta_1}{\pi} (\varphi_1^{-1}\circ f(\zeta)-x_1)^{1-\pi/\theta_1}$ we finally obtain (with $x=\varphi_1^{-1}\circ f(\zeta)$)
\begin{equation}\label{holo}
C_2^{-1} |x-x_1|^{\pi/\theta_1-1}\leq |\Tc'(x)|\leq C_2|x-x_1|^{\pi/\theta_1-1},\quad \forall x\in \overline{\Omega}.
\end{equation}
In particular, it follows from the mean-value theorem that
$$|\Tc(x)-\Tc(x_1)|\leq C_2\sup_{u\in [x_1,x]}|u -x_{1}|^{\pi/\theta_1-1} |x -x_{1}|  \leq C_2|x-x_1|^{\pi/\theta_1},$$
which also implies that 
$$C_2^{\theta_1/\pi} |y-\Tc(x_1)|^{\theta_1/\pi} \leq | \Tc^{-1} (y) -x_1|.$$
Next, by \eqref{holo} and the above estimate we have (since $1-\pi/\theta_1<0$)
$$|(\Tc^{-1})'(y)|\leq C_2|\Tc^{-1}(y)-x_1|^{1-\pi/\theta_1}\leq C_3|y-\Tc(x_1)|^{\theta_1/\pi-1}.$$
The mean-value theorem yields
$$
|\Tc^{-1}(y)-x_1| =  |f(y) |^{\theta_1/\pi} =  |f(y) -f(\Tc(x_1)) |^{\theta_1/\pi}       \leq C_4|y-\Tc(x_1)|^{\theta_1/\pi},\quad \forall y\in \overline{D},
$$
which also implies
$$C_4^{\pi/\theta_1} |x-x_1|^{\pi/\theta_1}       \leq |\Tc(x) -\Tc(x_1)|,\quad \forall x\in \overline{\Omega}. $$
Finally, differentiating \eqref{derive} we obtain
$$
0= f'' (\varphi_1^{-1})'(f)\Tc'(\varphi_1^{-1}\circ f)+(f')^2(\varphi_1^{-1})''(f)\Tc'(\varphi_1^{-1}\circ f)+
[f'(\varphi_1^{-1})'(f)]^2\Tc''(\varphi_1^{-1}\circ f)
$$
and the various previous bounds imply (with $x=\varphi_1^{-1}\circ f(\zeta)$)
$$|\Tc''(x)|\leq C_5|x-x_1|^{\pi/\theta_1-2},\quad \forall x\in \overline{\Omega}.$$
The conclusion follows for the estimates in the neighborhood of $x_1$. The first part of the estimates (away from $B(x_1,\delta)$ for a fixed $\delta>0$) is easily deduced from the previous ones.

\medskip

When $\Omega$ has exactly two corners, we set $\varphi_1(z) = (z-x_{1})^{\pi/\theta_1}$, $\varphi_2(z) = (z-\varphi_1(x_{2}))^{\pi/\theta_2}$, $\widetilde \Omega = \varphi_2\circ \varphi_1(\Omega)$ and  $f=  \varphi_2\circ \varphi_1\circ \Tc^{-1}$. We note that $ \varphi_1(\Omega)$ has only one corner at the point $\varphi_1(x_{1})$ with angle $\theta_2$ (because conformal mappings preserve angles). Therefore, $\widetilde \Omega$ is a $C^{2,\alpha}$ bounded open set and the Kellogg-Warschawski Theorem can be applied on $f$. Hence we can follow exactly the same proof as in the case of one corner. Since $\varphi_1$  (resp. $\varphi_1^{-1}$) is smooth away from $x_1$ (resp. $\varphi_1(x_1)$) we obtain the  estimates of  Proposition \ref{prop T}. For $N$ corners, we proceed in the same way: we apply $\varphi_k$ which sends the angle $\theta_k$ to the half plane, passing to $N+1-k$ corners to $N-k$ corners. 
\end{proof}

\begin{remark}
This proposition is one key point of our analysis and will be used in several estimates. The divergence of $D\Tc^{-1}$ close to the corners is linked with the behavior of $\Delta^{-1}$, and the form of the divergence will be crucial. Alternatively, we claim that Theorem \ref{thm:main} holds for more general corners: let $\gamma$ a parametrization of $\partial \Omega$ (couterclockwise direction, i.e. ${\rm Ind}_{\gamma}(z)\in \{0,1\}$), then we can consider:
\begin{description}
\item[(H')] $\gamma$ is a $C^{2,\alpha}$ ($\alpha>0$) Jordan curve, except in a finite number of points $\{ x_{k}\}_{k=1,\dots,N}$ of parameter $\{ s_{k}\}_{k=1,\dots,N}$  where $\displaystyle \lim_{s\to 0,s>0} {\rm arg}(-\gamma'(s_k-s),\gamma'(s_k+s))=\theta_{k}\in (0,\pi/2]$.
\end{description}
Actually, in the previous proof of Proposition \ref{prop T},  we would also need  a compatibility condition on $\gamma''$ near $s_k$ so that $(\gamma(s)-x_k)^{\pi/\theta_k}$ is $C^{2,\alpha}$ close to $s_k$, in order to use the Kellogg-Warschawski Theorem. For simplicity, we will further assume that $\partial \Omega$ is locally a corner near $x_k$, namely that (H) is satisfied. 
\end{remark}

\begin{remark}
The assumption $\partial \Omega \in C^{2,\alpha}$ away the corners comes from the use of Kellogg-Warschawski Theorem to get estimates on the second-order derivatives, which will be useful in our analysis (in particular for the log-Lipschitz regularity, see Section \ref{sect 5}).
\end{remark}

\medskip

In this article, we only consider acute angles $0<\theta_{k}\leq \frac\pi2$. We set
$$\al_{k} := 1-\frac{\theta_{k}}{\pi},\quad \text{so that}\quad \frac{1}{2}\leq \alpha_{k}<1.$$
In particular, we infer from Proposition \ref{prop T} that
\begin{eqnarray}
&&|D\mathcal{T}(\mathcal{T}^{-1}(y))|\leq C |y-\Tc(x_{k})|^{\al_{k}},\quad \forall y\in B(\Tc(x_{k}),\delta)\cap D;\label{est:DT1}\\
&&|D^2\mathcal{T}(\Tc^{-1}(y))|\leq C|y-\Tc(x_{k})|^{2\alpha_k-1},\quad \forall y\in B(\Tc(x_{k}),\delta)\cap D;\label{est:DT1-bis}\\
&&|D\mathcal{T}(\mathcal{T}^{-1}(y))|\leq C,\quad \forall y\in  D;\label{est:DT2}
\end{eqnarray}
and
\begin{eqnarray}\label{est:orsay}
&&| D\Tc^{-1}(\Tc(x)) | \leq C |x-x_k|^{1-1/(1-\alpha_{k})}.
\end{eqnarray}
Indeed, in the neighborhood  of $\Tc(x_{k})$ we have:
\[
|D\mathcal{T}(\mathcal{T}^{-1}(y))|\leq M |\mathcal{T}^{-1}(y)-x_{k}|^{\pi/\theta_{k}-1}\leq M M^{\pi/\theta_{k}-1} |y-\Tc(x_{k})|^{1- \theta_{k}/\pi}
\]
because $\pi/\theta_{k}-1\geq 0$. Moreover
\[
|D^2\mathcal{T}(\mathcal{T}^{-1}(y))|\leq M |\mathcal{T}^{-1}(y)-x_{k}|^{\pi/\theta_{k}-2}\leq M M^{\pi/\theta_{k}-2} |y-\Tc(x_{k})|^{1- 2\theta_{k}/\pi}
\]
because $\pi/\theta_{k}-2\geq 0.$
Finally,
\begin{eqnarray*}
&&| D\Tc^{-1}(\Tc(x)) | \leq M |\Tc(x) -\Tc(x_{k}) |^{\theta_{k}/\pi-1} \leq M M^{-\theta_{k}/\pi+1} |x-x_k|^{1-\pi/\theta_{k}}.
\end{eqnarray*}

\begin{remark}\label{DTDTt}
Since $\Tc$ is holomorphic we have
\begin{equation*}
D\mathcal{T}(x)D\mathcal{T}^T(x)=\det (D\mathcal{T}(x)) \mathrm{Id}=|\det (D\mathcal{T}(x))|  \mathrm{Id}.
\end{equation*}
\end{remark}
\subsection{Biot-Savart Law}

In this subsection, we recall briefly the Biot-Savart law on bounded and simply connected open domains (for more details, we refer to \cite{CL} and references therein). Let $\Omega$ be such a domain. For any $\omega\in L^\infty(\Omega)$, there exists a unique vector field $u$ such that 
\[
u\in L^2(\Omega), \quad \div u =0 \text{ on }\Omega,\quad \curl u =\omega \text{ on }\Omega,\quad u\cdot  n =0 \text{ on }\partial \Omega.
\]

Moreover, this vector field is given in terms of $\omega$ by the explicit formula of the Biot-Savart law:
\begin{equation}\label{eq:BS-1}
u(x)=K_\Omega[\omega](x)= \int_{\Om}K_{\Om}(x,\tilde{x})\omega(\tilde{x})\d\tilde{x},
\end{equation}
with
\[
K_{\Om}(x,\tilde{x})=\f{1}{2\pi}D\mathcal{T}^T(x)\left(\f{(\mathcal{T}(x)-\mathcal{T}(\tilde{x}))^\bot}{|\mathcal{T}(x)-\mathcal{T}(\tilde{x})|^2}-\f{(\mathcal{T}(x)
-\mathcal{T}(\tilde{x})^*)^\bot}{|\mathcal{T}(x)-\mathcal{T}(\tilde{x})^*|^2}\right)= D\mathcal{T}^T(x) K_{D}(\mathcal{T}(x),\mathcal{T}(\tilde{x})) ,
\]
where $\Tc$ is any biholomorphism mapping $\Omega$ to $D=B(0,1)$ (see the previous subsection) and where $K_{D}$ is the Biot-Savart kernel for the unit disk:
\[
K_{D}(x,\tilde{x})=\f{1}{2\pi}\left(\f{x-\tilde{x}}{|x-\tilde{x}|^2}-\f{x-\tilde{x}^*}{|x-\tilde{x}^*|^2}\right)^{\perp}, \ x\neq \tilde x\in D.
\]
Here the star comes from the image method, i.e.
\[
z^* := \frac{z}{|z|^2},
\]
and $\begin{pmatrix} z_{1}\\z_{2}\end{pmatrix}^\perp=\begin{pmatrix} -z_{2}\\z_{1}\end{pmatrix}$ (we set $K_D(x,0)=\f{1}{2\pi}\f{x^\perp}{|x|^2}$).
For all $y\in D$, changing variable $z=\Tc(\tilde x)$ in the integral, one can rewrite the Biot-Savart law as follows:
\begin{equation}\label{eq:BS-2}
u(\Tc^{-1}(y))=\f{1}{2\pi}D\mathcal{T}^T(\Tc^{-1}(y))
\int_{D}\left(\f{(y-z)^\bot}{|y-z|^2}-\f{(y-z^*)^\bot}{|y-z^*|^2}\right)\omega(\mathcal{T}^{-1}(z))\det (D\mathcal{T}^{-1}(z)) \d z.
\end{equation}

\medskip

We next recall the following useful properties for the Biot-Savart Kernel on $D$:
\begin{lemma} There exists $C>0$ such that:
\begin{equation}\label{estimate:K1}
|K_{D}(y,z)|\leq \frac{C}{|y-z|}, \quad \forall (y,z)\in D^2,\  y\neq z,
\end{equation}
and
\begin{equation}\label{estimate:K2}
|K_{D}(y_1,z)-K_{D}(y_2,z)|\leq C \frac{|y_1-y_2|}{|y_1-z||y_2-z|}, \quad \forall (y_1,y_2)\in D^2,\  z\in D\setminus \{y_1,y_2\}.
\end{equation}
\end{lemma}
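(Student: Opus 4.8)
The plan is to prove both estimates by direct computation using only the explicit formula for the kernel on the disk, since $K_D$ is a classical object. For the first bound \eqref{estimate:K1}, I would start from the definition
\[
K_D(y,z)=\frac{1}{2\pi}\left(\frac{y-z}{|y-z|^2}-\frac{y-z^*}{|y-z^*|^2}\right)^{\perp}
\]
and treat each term separately. The first term has modulus $\frac{1}{2\pi|y-z|}$, so it immediately provides the $C/|y-z|$ behavior, which is the dominant (singular) contribution as $z\to y$. For the second term I would show it is in fact bounded uniformly on $D\times D$: the key geometric fact is that for $y,z\in D$, the reflected point $z^*=z/|z|^2$ lies outside or on $\partial D$, and one has the standard inequality $|y-z^*|\geq c\,|y-z|$ as well as a lower bound $|y-z^*|\geq 1-|y|$ bounded away from zero in the bulk. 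The cleanest route is to use the classical identity $|y-z^*|\,|z|=|y\,|z|-z/|z|\,|\,=|\,|z|\,y-z^*|z|\,|$ leading to $|z|\,|y-z^*|=|z\,|y|^2? \ldots$; more simply, I would invoke the well-known symmetry relation $|z|\,|y-z^*|=|y|\,|z-y^*|$ and the fact that $|y-z^*|$ stays bounded below, so the second term never blows up. Combining, the whole kernel is $O(1/|y-z|)$, which is \eqref{estimate:K1}.

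For the Lipschitz-type bound \eqref{estimate:K2}, I would again split into the two terms and estimate each difference. For the principal term I use that the map $w\mapsto w^{\perp}/|w|^2$ (the two-dimensional Cauchy-type kernel) satisfies
\[
\left|\frac{(y_1-z)^{\perp}}{|y_1-z|^2}-\frac{(y_2-z)^{\perp}}{|y_2-z|^2}\right|\leq C\,\frac{|y_1-y_2|}{|y_1-z|\,|y_2-z|},
\]
which is the standard gradient bound for the Newtonian/Biot-Savart kernel: writing $w_i=y_i-z$, one has $\big|\frac{w_1^\perp}{|w_1|^2}-\frac{w_2^\perp}{|w_2|^2}\big|=\big|\frac{w_1^\perp|w_2|^2-w_2^\perp|w_1|^2}{|w_1|^2|w_2|^2}\big|$, and the numerator is controlled by $|w_1-w_2|$ times $\max(|w_1|,|w_2|)$, giving the claimed quotient after algebraic manipulation. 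For the reflected term, since $z^*$ is fixed and the term is smooth (bounded away from its singularity), I expect an even better bound of the form $C|y_1-y_2|$, which is dominated by the right-hand side of \eqref{estimate:K2} because $|y_i-z|\leq 2$ on $D$.

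The main obstacle, and the only place requiring genuine care, is the uniform control of the reflected term $\frac{(y-z^*)^\perp}{|y-z^*|^2}$ and its increments. One must verify that $|y-z^*|$ is bounded below by a constant multiple of $|y-z|$ (so that its contribution to \eqref{estimate:K2} is absorbed), and that no spurious singularity arises as $z\to 0$, which is why the paper separately defines $K_D(y,0)$. The cleanest way to handle this is the reflection identity
\[
|z|\,\big|y-z^*\big| = \big|\,|z|\,y-z/|z|\,\big| = \big|\,|z|^2\,y-z\,\big|/|z|,
\]
equivalently $|z|\,|y-z^*|=\big|z-|z|^2 y\big|$, from which one reads off $|y-z^*|\geq |y-z|$ for $y,z\in D$ (since $|z|\leq 1$ pushes the reflected point strictly farther from $y$ than $z$ is, whenever $y$ is interior). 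Once this comparison is established, both estimates follow by routine term-by-term bounding, and I would present the reflected-term analysis first so that the singular principal term can then be handled by the classical Calderón–Zygmund-type pointwise kernel estimates without further comment.
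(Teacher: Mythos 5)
Your proposal has the same skeleton as the paper's proof: everything reduces to the comparison $|y-z^*|\geq c\,|y-z|$ for $y,z\in D$ together with the exact identity $\bigl|\frac{a}{|a|^2}-\frac{b}{|b|^2}\bigr|=\frac{|a-b|}{|a|\,|b|}$ (the paper's \eqref{ab}, proved by squaring), and your final paragraph correctly isolates the comparison inequality as the crux. The paper proves it with $c=1/3$ by projecting $z^*$ onto $\overline D$ and noting $|z-z^*|\leq 2\,d(z^*,D)\leq 2|y-z^*|$. Your stronger claim $|y-z^*|\geq|y-z|$ is in fact true, but it does not simply ``read off'' from the reflection identity: one has to expand, e.g., $|y-z^*|^2-|y-z|^2=\frac{1-|z|^2}{|z|}\bigl(\frac{1+|z|^2}{|z|}-2\,y\cdot\frac{z}{|z|}\bigr)\geq\frac{1-|z|^2}{|z|}\,(2-2|y|)>0$; also your displayed identity has a slipped factor, since the correct statement is $|z|^2\,|y-z^*|=\bigl|z-|z|^2y\bigr|$. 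Likewise, for the principal term in \eqref{estimate:K2} the numerator $\bigl|w_1^\perp|w_2|^2-w_2^\perp|w_1|^2\bigr|$ equals exactly $|w_1-w_2|\,|w_1|\,|w_2|$, not $|w_1-w_2|\max(|w_1|,|w_2|)$; your stated inequality is correct, but the sketch of why is dimensionally off.

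The genuine errors are two intermediate claims which, as written, would make the argument fail. First, the reflected term $\frac{(y-z^*)^\perp}{|y-z^*|^2}$ is \emph{not} uniformly bounded on $D\times D$, and $|y-z^*|$ is \emph{not} bounded below by a constant: take $y=z$ with $|z|=1-\e$; then $|y-z^*|=\frac{1-|z|^2}{|z|}\approx 2\e\to 0$, so the reflected term blows up like $1/(4\pi\e)$. Indeed $d(z^*,D)=\frac{1}{|z|}-1\to 0$ as $z\to\partial D$, which is precisely why only the comparison with $|y-z|$ (and not a constant lower bound) is usable. Second, and for the same reason, your treatment of the reflected term in \eqref{estimate:K2} is wrong: its increment does \emph{not} satisfy a uniform bound $C|y_1-y_2|$, because the relevant gradient of $w\mapsto w^\perp/|w|^2$ is of order $|y_i-z^*|^{-2}$, which is unbounded when $y_1,y_2,z$ all approach the same boundary point. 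The repair is exactly what the paper does and what your last paragraph gestures at: apply the identity to the reflected difference to get precisely $\frac{|y_1-y_2|}{|y_1-z^*|\,|y_2-z^*|}$, and then use $|y_i-z^*|\geq c\,|y_i-z|$ to absorb this into the right-hand side of \eqref{estimate:K2}. With that substitution in place of your ``smoothness of the reflected term'' argument, the proof closes; without it, the boundary-concentration regime is not covered.
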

\begin{proof} 
For the first estimate, the case $z=0$ is obvious, and we compute for $z\neq0$:
\[
|y-z|\leq |y-z^* | + |z^*-z|.
\]
We denote by $r=|z|$ and $p_{D}(z^*)$ the orthogonal projection of $z^*$ on $\overline{D}$. Then, for any $z\in D\setminus \{0\}$, we have $|z^*|>1$ and $p_{D}(z^*)\in[z,z^*]$. We compute easily that
\[
|p_{D}(z^*)-z| = 1-r,\quad |z^*-p_{D}(z^*)| = \frac1r-1,
\]
so
\[
|z^*-z| = (1-r)+(\frac1r-1).
\]
We observe that, since $r>0$, we have that $1-r\leq  \frac1r-1$, hence
\[
|z^*-z|  \leq 2 (\frac1r-1)=2   d(z^*,D)\leq 2 |z^*-y|, \quad \forall y\in D.
\]
Therefore
\begin{equation}\label{z-z*}
|y-z|\leq 3|y-z^* | , \quad \forall (y,z)\in D\times (D\setminus \{0\})
\end{equation}
which implies that
\[
2\pi|K_{D}(y,z)|\leq \frac{1}{|y-z|}+\frac{1}{|y-z^*|}\leq \frac{4}{|y-z|},
\]
which ends the proof of \eqref{estimate:K1}.

The second estimate follows directly from \eqref{z-z*} and the following equality
\begin{equation}\label{ab}
\Bigl| \frac{a}{|a|^2}- \frac{b}{|b|^2} \Bigl|=  \frac{|a-b|}{|a||b|}
\end{equation}
which can be proved by squaring both sides.
\end{proof}

Moreover, we have the classical log-Lipschitz regularity:
\begin{lemma} \label{est:K3}
There exists $C>0$ such that for all  $(y_1,y_2)\in D^2$:
\begin{equation*}
\begin{split}
\int_{D} |K_D(y_1,z)-K_D(y_2,z)|\d z\leq Ch(|y_1-y_2|) ,\\
\int_{D} |K_D(z,y_{1})-K_D(z,y_2)|\d z\leq Ch(|y_1-y_2|),
\end{split}
\end{equation*}
where $h:\R_+\to \R_+$ is defined by
\begin{equation}\label{defi h}
h(r):=r(1+|\ln r|),\quad \forall r\geq 0.
\end{equation}
\end{lemma}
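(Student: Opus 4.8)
The plan is to prove the classical log-Lipschitz estimate for the Biot-Savart kernel on the unit disk $D$ by reducing everything to the two pointwise bounds \eqref{estimate:K1} and \eqref{estimate:K2} already established. Since the two inequalities in Lemma \ref{est:K3} are symmetric in structure (the kernel $K_D(y,z)$ has the same type of singularity in each slot, as seen from \eqref{estimate:K1}--\eqref{estimate:K2}), I would prove the first one and remark that the second follows by the same computation. Set $r:=|y_1-y_2|$; we may assume $r$ is small (say $r\le 1$), since for $r$ bounded below the integral is controlled by $\int_D(|K_D(y_1,z)|+|K_D(y_2,z)|)\d z\le C$ using \eqref{estimate:K1} and the integrability of $1/|y-z|$ on the bounded set $D$, and $h(r)$ is bounded below by a positive constant there.

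The key step is the standard splitting of the domain of integration into a near region and a far region relative to the scale $r$. Concretely I would write
\begin{equation*}
\int_D |K_D(y_1,z)-K_D(y_2,z)|\d z = \int_{D\cap B} + \int_{D\setminus B},
\end{equation*}
where $B=B(y_1,2r)$ (any fixed multiple of $r$ works). On the near region $D\cap B$, I do not use the difference estimate; instead I bound the integrand crudely by the sum $|K_D(y_1,z)|+|K_D(y_2,z)|\le C(|y_1-z|^{-1}+|y_2-z|^{-1})$ from \eqref{estimate:K1}, and integrate each singularity over a ball of radius comparable to $r$. Since $\int_{|y-z|\le 3r}|y-z|^{-1}\d z = 2\pi\cdot 3r = Cr$ in the plane (the $1/|y-z|$ singularity is integrable in 2-D with linear-in-radius bound), the near contribution is $\le Cr\le C h(r)$.

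On the far region $D\setminus B$ I use the difference bound \eqref{estimate:K2}, giving integrand $\le C r\,|y_1-z|^{-1}|y_2-z|^{-1}$. For $z\notin B(y_1,2r)$ one has $|y_2-z|\ge |y_1-z|-r\ge \tfrac12|y_1-z|$, so the integrand is $\le C r\,|y_1-z|^{-2}$, and integrating this over the annular region $2r\le |y_1-z|\le 2$ (since $D$ has diameter $2$) yields $C r\int_{2r}^{2}\rho^{-2}\cdot\rho\,\d\rho = Cr\int_{2r}^2 \rho^{-1}\d\rho = Cr\,(\ln 2 - \ln 2r)\le C r(1+|\ln r|)=C h(r)$. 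Adding the two regions gives the claimed bound $Ch(r)$. The only point requiring minor care — the main obstacle, such as it is — is handling the boundary effects and the image-point contribution uniformly in $D$: one must check the annular integral bounds are genuinely uniform in $y_1$ over the bounded set $D$ (the outer radius is controlled by $\operatorname{diam} D=2$), and that the crude near-region estimate remains valid when $y_1,y_2$ lie near $\partial D$ where $z^*$ enters; both are handled by \eqref{z-z*}, which already absorbed the image term into the same $1/|y-z|$ scale. The logarithm in $h$ arises precisely from the far-region radial integral $\int \rho^{-1}\d\rho$, confirming that log-Lipschitz, rather than Lipschitz, regularity is the sharp outcome.
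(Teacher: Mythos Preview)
Your argument for the first inequality is correct and essentially identical to the paper's: split into a near ball of radius comparable to $d=|y_1-y_2|$, use \eqref{estimate:K1} there, and use \eqref{estimate:K2} together with $|y_2-z|\ge \tfrac12|y_1-z|$ on the complement to produce the $\int \rho^{-1}\d\rho$ that gives the logarithm. The paper centers the ball at the midpoint $\tilde y$ rather than at $y_1$, but this is cosmetic.

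The gap is your treatment of the second inequality. You assert it ``follows by the same computation'' because ``the kernel $K_D(y,z)$ has the same type of singularity in each slot, as seen from \eqref{estimate:K1}--\eqref{estimate:K2}''. But \eqref{estimate:K2} is a bound for the difference in the \emph{first} variable only; there is no symmetric analogue available. Indeed, writing out $K_D(z,y_1)-K_D(z,y_2)$ and applying \eqref{ab} gives a principal part $\frac{|y_1-y_2|}{|z-y_1||z-y_2|}$ plus an image part $\frac{|y_1^*-y_2^*|}{|z-y_1^*||z-y_2^*|}=\frac{|y_1-y_2|}{|y_1||y_2||z-y_1^*||z-y_2^*|}$. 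The factor $|y_1||y_2|$ in the denominator blows up when $y_1,y_2$ are near the origin, and \eqref{z-z*} alone does not cancel it: you only get $|z-y_i^*|\ge \tfrac13|z-y_i|$, which still leaves $|y_1|^{-1}|y_2|^{-1}$ uncontrolled. So the pointwise inequality \eqref{estimate:K2} genuinely fails in the second slot, and your far-region step does not go through as written.

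The paper fixes this at the level of the integral (not pointwise) by a case split: if one of $y_1,y_2$ lies in $B(0,1/4)$, both do (since $d<1/4$), and then $|y_i||z-y_i^*|=\bigl||y_i|z-\tfrac{y_i}{|y_i|}\bigr|\ge 1-|y_i||z|\ge \tfrac12$, so the image contribution is bounded and integrates to $O(d)$; if both lie outside $B(0,1/4)$, one can afford the crude bound $|y_i||z-y_i^*|\ge \tfrac14\cdot\tfrac13|z-y_i|$ and proceed as for the principal part. You need this extra argument (or an equivalent one) to close the second estimate.
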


\begin{proof}
The first estimate is standard (see e.g. \cite[Lem. 3.1, Chapter 2]{marchioro-pulvirenti}), but we write this proof because it will be extended later for $\Omega$.

Let $y_{1}, y_{2} \in D$, let $d=|y_{1}-y_{2}|$ and $\tilde y=\frac{y_{1}+y_{2}}2$.

Then we separate the integral in two parts:
\[
 \int_{B(\tilde y,d)\cap D} |K_D(y_1,z)-K_D(y_2,z)|\d z + \int_{B(\tilde y,d)^c\cap D} |K_D(y_1,z)-K_D(y_2,z)|\d z.
\]
When $|\tilde y -z |\geq d$ then $|\tilde y -z | \leq \frac{d}2+ |y_{1}-z|\leq \frac{|\tilde y -z |}2+ |y_{1}-z|$ which implies that $|y_{1}-z|\geq \frac{|\tilde y -z |}2$.
In the first integral above, we use \eqref{estimate:K1} whereas in the second one we use \eqref{estimate:K2}:
\begin{eqnarray*}
\int_{D} |K_D(y_1,z)-K_D(y_2,z)|\d z &\leq&  \int_{B(\tilde y,d)} \frac{C\d z}{|y_{1}-z|}+ \int_{B(\tilde y,d)} \frac{C\d z}{|y_{2}-z|} +  \int_{B(\tilde y,d)^c\cap D} \frac{Cd \d z}{|y_{1}-z| |y_{2}-z|}\\
&\leq& 2C \int_{B(0,3d/2)} \frac{\d y}{|y|}+ Cd \int_{B(0,2)\setminus B(0,d)} \frac{4 \d y}{|y|^2}\\
&\leq & 6\pi  C d +8\pi C d(\ln 2- \ln d) \leq  C h(d),
\end{eqnarray*}
 which ends the proof for the first inequality.
 
 The second statement is original and will be used only at the end of the article (namely page \pageref{K3bis}). As before, we separate the integral in two parts, the first part of which can be treated exactly in the same way with \eqref{estimate:K1}:
 \begin{eqnarray*}
 \int_{B(\tilde y,d)\cap D} |K_D(z,y_1)-K_D(z,y_2)|\d z \leq  \int_{B(\tilde y,d)} \frac{C\d z}{|y_{1}-z|}+ \int_{B(\tilde y,d)} \frac{C\d z}{|y_{2}-z|} \leq 6\pi  C d.
 \end{eqnarray*}
Concerning the second part, we write:
\[
|K_D(z,y_1)-K_D(z,y_2)| \leq \frac{|y_1-y_2|}{|z-y_1||z-y_2|} +\frac{|y_1^*-y_2^*|}{|z-y_1^*||z-y_2^*|}=\frac{|y_1-y_2|}{|z-y_1||z-y_2|}+ \frac{|y_{1}-y_{2}|}{|y_{1}| |y_{2}||z-y_1^*||z-y_2^*|},
\]
where we have used \eqref{ab} three times. The first term in the right-hand side is exactly the one which was treated in the first statement. Without loss of generality, we can restrict to the case where $d<1/4$. So for $z\in B(\tilde y,d)^c\cap D$, we consider two cases:

\medskip

\textbf{Case 1: } $y_{1}$ or $y_{2}$ belongs to $B(0,1/4)$.

As $d<1/4$, it implies that both $y_{1}$ and $y_{2}$ belong to $B(0,1/2)$, hence we have for $i=1,2$:
\[
|y_{i}| |z-y_{i}^*|= \Big| |y_{i}| z - \frac{y_{i}}{|y_{i}|}\Big| \geq 1- |y_{i}| |z| \geq \frac12 
\]
because $z\in D$. Therefore, we conclude easily:
 \begin{eqnarray*}
 \int_{B(\tilde y,d)^c\cap D} |K_D(z,y_1)-K_D(z,y_2)|\d z \leq  d \int_{B(\tilde y,2)\setminus B(\tilde y,d)} \Bigl(\frac{4}{|z-\tilde y|^2}+4\Bigl) \d z \leq Cd(1+|\ln d|).
 \end{eqnarray*}
\medskip

\textbf{Case 2: } $y_{1}$ and $y_{2}$ belongs to $B(0,1/4)^c$.

By \eqref{z-z*}, we write that
\[
|y_{i}| |z-y_{i}^*| \geq \frac14 \frac13 |z-y_{i}| \geq  \frac14 \frac13 \frac12 |z-\tilde y|
\]
because $|\tilde y-z|\geq d$ (see the proof of the first statement). So, the conclusion  is as in the first statement:
 \begin{eqnarray*}
 \int_{B(\tilde y,d)^c\cap D} |K_D(z,y_1)-K_D(z,y_2)|\d z \leq  d \int_{B(\tilde y,2)\setminus B(\tilde y,d)} \frac{(4+24^2)\d z}{|z-\tilde y|^2} \leq Cd(1+|\ln d|).
 \end{eqnarray*}
The lemma is proved.
 \end{proof}

We conclude this section by recalling the following property,  which expresses the fact that the velocity field $(D\Tc u)\circ \Tc^{-1}$ is tangent to the boundary $\partial D$:
\begin{lemma}
\label{est:K4}
Let $y$ such that $|y|=1$. Then 
$$K_D(y,z)\cdot y=0,\quad \forall z\in D.$$
\end{lemma}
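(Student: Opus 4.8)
The plan is to show that, when $|y|=1$, the vector appearing inside the $(\cdot)^\perp$ in the definition of $K_D(y,z)$ is a scalar multiple of $y$; since $y^\perp\cdot y=0$, the conclusion is then immediate. The degenerate case $z=0$ is handled directly from the convention $K_D(y,0)=\f{1}{2\pi}\f{y^\perp}{|y|^2}$, which gives $K_D(y,0)\cdot y=\f{1}{2\pi}\f{y^\perp\cdot y}{|y|^2}=0$. So I focus on $z\neq 0$, where $z^*=z/|z|^2$ is well defined and, since $z\in D$ forces $|z|<1=|y|$, one has $y\neq z$ so that no denominator vanishes.

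The key step is the reflection identity: for $|y|=1$ and $z\neq 0$,
\[
|z|^2\,|y-z^*|^2=|y-z|^2 .
\]
I would prove this by a one-line expansion. Using $z^*=z/|z|^2$ and $|y|^2=1$, the left-hand side equals $|z|^2|y|^2-2\,y\cdot z+1=|z|^2-2\,y\cdot z+1$, which is exactly $|y-z|^2=1-2\,y\cdot z+|z|^2$. This is the standard image-point property underlying the method of images: the inversion $z\mapsto z^*$ sends $D$ to the exterior of the disk and scales distances to boundary points by the factor $|z|$.

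Given this identity I substitute into the second term of the kernel. Since $y-z^*=(|z|^2y-z)/|z|^2$ and $|y-z^*|^2=|y-z|^2/|z|^2$, we get
\[
\f{y-z^*}{|y-z^*|^2}=\f{|z|^2y-z}{|y-z|^2},
\]
and therefore
\[
\f{y-z}{|y-z|^2}-\f{y-z^*}{|y-z^*|^2}
=\f{(y-z)-(|z|^2y-z)}{|y-z|^2}
=\f{(1-|z|^2)\,y}{|y-z|^2}.
\]
Hence $2\pi K_D(y,z)=\f{1-|z|^2}{|y-z|^2}\,y^\perp$, and taking the dot product with $y$ and using $y^\perp\cdot y=0$ yields $K_D(y,z)\cdot y=0$. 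There is no genuine obstacle here: the only content is the reflection identity above, and everything else is a direct simplification. I would note that this computation also makes transparent the geometric meaning of the lemma, namely that on $\partial D$ the image construction forces the Biot--Savart kernel to point in the direction $y^\perp$, i.e. tangentially to the boundary.
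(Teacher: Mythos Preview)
Your proof is correct and uses essentially the same computation as the paper, namely the reflection identity $|z|^2|y-z^*|^2=|y-z|^2$ for $|y|=1$; the paper carries out the calculation for general $y$ and obtains the explicit factor $(|y|^2-1)$, whereas you specialize to $|y|=1$ from the start. A minor bonus of your route is that you actually establish the slightly sharper statement $2\pi K_D(y,z)=\dfrac{1-|z|^2}{|y-z|^2}\,y^\perp$, making the tangential direction of the kernel explicit.
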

\begin{proof}
This can be proved from the following basic computation:
\begin{equation*}
\begin{split}
2\pi K_D(y,z)\cdot y =& \frac{-z^\perp \cdot y}{|y-z|^2}+\frac{z^{*\perp} \cdot y}{|y-z^*|^2}= z^\perp \cdot y \frac{(|y|^2-1)(\frac1{|z|^2}-1)}{|y-z|^2|y-z^*|^2},\quad \forall y, z \in \R^2, \ z\notin \{ y,  y^*\}.
\end{split}
\end{equation*}
\end{proof}

\section{$L^\infty$, $W^{1,p}$ and log-Lipschitz regularity}\label{sect 3}

As mentioned in the introduction,  it is classical in elliptic theory on domains with acute corners that if $\omega \in L^p(\Omega)$ then $u\in W^{1,p}(\Omega)$ for any $p\in [1,\infty)$, where $u$ is given by \eqref{eq:BS-1} (see e.g. \cite[Theo. 4.4.4.13]{grisvard}). For sake of 
self-containedness, we include in the proof of the proposition below the precise references stating that all the $W^{1,p}$ norms can be controlled by the $L^\infty$ norm of $\omega$. In particular, since a bounded vorticity solution $\omega(t,\cdot)$ of the Euler equations satisfies $\|\omega(t,\cdot)\|_{L^\infty(\Om)}\leq \|\omega(0,\cdot)\|_{L^\infty(\Om)} $ this will provide uniform in time estimates for the $W^{1,p}$ norms of the corresponding velocity $u(t,\cdot)$.

\begin{proposition} \label{prop:W1p-bound}
Let $\Om$ satisfy (H).  For any $p\in (1,\infty)$, there exists a constant $C(p,\Omega)$ depending on $\Om$ and $p$ such that for any  $u$ given by \eqref{eq:BS-1} with $\omega \in L^\infty(\Om)$, 
\[
\|u\|_{W^{1,p}(\Om)}\leq C(p,\Omega)\|\omega\|_{L^\infty(\Om)} .
\]
\end{proposition}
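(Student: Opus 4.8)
The plan is to pass to the stream function and rely on the elliptic theory for the Dirichlet Laplacian on domains with corners. Since $u$ is divergence free on the simply connected set $\Omega$ and tangent to $\partial\Omega$, there exists a stream function $\psi$ with $u=\nabla^\perp\psi$; the conditions $\curl u=\omega$ and $u\cdot n|_{\partial\Omega}=0$ then amount exactly to the Dirichlet problem \eqref{LP}, namely $\Delta\psi=\omega$ in $\Omega$ and $\psi|_{\partial\Omega}=0$ (the boundary being connected, $\psi$ is constant there and can be normalized to vanish). Consequently $\|u\|_{W^{1,p}(\Omega)}=\|\nabla\psi\|_{W^{1,p}(\Omega)}\le C\|\psi\|_{W^{2,p}(\Omega)}$, and it suffices to control $\|\psi\|_{W^{2,p}(\Omega)}$ by $\|\omega\|_{L^\infty(\Omega)}$.

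The heart of the argument is the $W^{2,p}$ regularity for \eqref{LP} on the corner domain $\Omega$, and here the hypothesis $\theta_k\le\pi/2$ is decisive. In the Kondrat'ev--Grisvard expansion of $\psi$ near each vertex $x_k$, the singular profiles behave like $|x-x_k|^{\pi/\theta_k}$ times a smooth angular factor, and since $\pi/\theta_k\ge 2$ these profiles already belong to $W^{2,p}(\Omega)$ for every finite $p$. Hence no singular function obstructs the regularity and \cite[Theo. 4.4.4.13]{grisvard} applies, yielding $\psi\in W^{2,p}(\Omega)$ together with the a priori bound
\begin{equation*}
\|\psi\|_{W^{2,p}(\Omega)}\le C(p,\Omega)\,\|\omega\|_{L^p(\Omega)}
\end{equation*}
for every $p\in(1,\infty)$; the Dirichlet Laplacian being an isomorphism onto $L^p$ for these convex corners, no lower-order term survives, and should one appear it is absorbed using the elementary bound $\|u\|_{L^2(\Omega)}\le C\|\omega\|_{L^\infty(\Omega)}$ coming from \eqref{eq:BS-2}. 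Finally, since $\Omega$ is bounded, $\|\omega\|_{L^p(\Omega)}\le |\Omega|^{1/p}\|\omega\|_{L^\infty(\Omega)}$, and combining the three displays gives the claim with the constant $C(p,\Omega)$ inherited from Grisvard's theorem.

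I expect the main subtlety to lie in justifying that the corner hypothesis $\theta_k\le\pi/2$ removes every singular-function obstruction to $W^{2,p}$ regularity: for $\theta_k>\pi/2$ the exponent $\pi/\theta_k<2$ would force $|x-x_k|^{\pi/\theta_k}\notin W^{2,p}$ for $p$ large, and the estimate would break down, exactly the phenomenon discussed in the introduction. A secondary point is that Grisvard's constant $C(p,\Omega)$ is allowed to depend on $p$ in an unspecified way (in fact growing like $p^2$); since the present proposition does not track the rate in $p$, this is harmless, in contrast with the Yudovich-type argument where the linear growth $C(\Omega)p$ was essential. If one instead prefers a route avoiding Grisvard, one can estimate $\nabla u$ directly from \eqref{eq:BS-2} by differentiating the kernel and using Proposition \ref{prop T} to control $D\mathcal{T}$, $D^2\mathcal{T}$ and $\det D\mathcal{T}^{-1}$ near the vertices; the borderline integrability of the Jacobian factor $|\det D\mathcal{T}^{-1}(z)|\sim|z-\mathcal{T}(x_k)|^{2\theta_k/\pi-2}$ (exponent in $(-2,-1]$) against the singular kernel is then compensated by the vanishing prefactor $|D\mathcal{T}(\mathcal{T}^{-1}(y))|\le C|y-\mathcal{T}(x_k)|^{\alpha_k}$ from \eqref{est:DT1}, which is the same mechanism underlying the $L^\infty$ and log-Lipschitz estimates of Section \ref{sect 3}.
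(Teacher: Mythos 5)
Your proposal follows essentially the same route as the paper: reduce to the Dirichlet problem \eqref{LP} for the stream function and invoke Grisvard's elliptic theory on corner domains, noting that the corner exponents $\pi/\theta_k\ge 2$ rule out all singular functions, so that \cite[Theo. 4.4.4.13]{grisvard} gives $\psi\in W^{2,p}(\Omega)$ for every finite $p$. The one place where the arguments genuinely differ is the passage from this regularity statement to a quantitative bound. The paper does not use the ``clean'' estimate $\|\psi\|_{W^{2,p}}\le C(p,\Omega)\|\omega\|_{L^p}$ as a black box; it relies only on Grisvard's a priori estimate with a lower-order term,
\[
\|\psi\|_{W^{2,p}(\Omega)}\le K\bigl(\|\omega\|_{L^p(\Omega)}+\|\psi\|_{W^{1,p}(\Omega)}\bigr),
\]
and removes that term in two steps: an energy estimate (Cauchy--Schwarz, H\"older, Poincar\'e) gives $\|\psi\|_{H^1(\Omega)}\le C\|\omega\|_{L^\infty(\Omega)}$, hence $\|\psi\|_{H^2(\Omega)}\le C\|\omega\|_{L^\infty(\Omega)}$ from the $p=2$ case, and then the two-dimensional embedding $H^2(\Omega)\hookrightarrow W^{1,p}(\Omega)$ is fed back into the displayed inequality for general $p$. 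Your primary justification---that the Dirichlet Laplacian is an isomorphism from $W^{2,p}\cap W^{1,p}_0$ onto $L^p$, so no lower-order term appears---can be defended (injectivity by the maximum principle, surjectivity by Grisvard's regularity theorem, then the bounded inverse theorem), and is consistent with the paper's introduction, which asserts \eqref{CZ} for all $2\le p<\infty$ with a constant that \cite{BDT} identifies as growing like $p^2$. But your fallback remark is wrong as stated: a lower-order term $\|\psi\|_{W^{1,p}(\Omega)}$ with $p>2$ cannot be absorbed by the bound $\|u\|_{L^2(\Omega)}\le C\|\omega\|_{L^\infty(\Omega)}$, since an $L^2$ control on $\nabla\psi$ says nothing about $\|\nabla\psi\|_{L^p}$ for $p>2$; closing that loop requires precisely the bootstrap through $H^2$ that the paper carries out. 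So your proof stands or falls with the isomorphism claim; if you prefer to rely only on the weaker a priori estimate (as the paper does), replace the absorption remark by the two-step $p=2$ plus embedding argument.
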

\begin{proof}In this proof the notation $K_j$, $j\in \N$ refers to constants depending only on $p$ and $\Omega$.

As $\omega \in L^\infty(\Omega)$ then Theorem 4.4.4.13 in \cite{grisvard} states that the solution of the elliptic problem \eqref{LP} verifies $\psi \in W^{2,p}(\Omega)$ for any $p\in [1,\infty)$. Hence, Theorem 4.3.2.4 for $p\neq 2$ and Theorem 4.3.1.4 for $p=2$ imply that
\begin{equation}\label{ineq:sam}
\|\psi \|_{W^{2,p}(\Om)}\leq K_1 (\|\omega\|_{L^p(\Om)}+\|\psi \|_{W^{1,p}(\Om)}) .
\end{equation} We assume first that $p=2$.
By performing an energy estimate on \eqref{LP} together with Cauchy-Schwarz, H\"older and Poincar\'e inequalities we have
\[
\| \nabla \psi \|_{L^2(\Om)}^2\leq \| \omega \|_{L^2(\Om)} \| \psi \|_{L^2(\Om)} \leq K_2\| \omega \|_{L^\infty(\Om)} \|\nabla \psi \|_{L^2(\Om)}
\]
so that $\|\psi \|_{H^{1}(\Om)}\leq K_3\| \nabla \psi \|_{L^2(\Om)}\leq K_3 K_2 \| \omega \|_{L^\infty(\Om)}$. Coming back to \eqref{ineq:sam} we obtain the estimate:
\[
\|\psi \|_{H^2(\Om)}\leq  K_4 \|\omega\|_{L^\infty(\Om)} .
\]

Then we assume that $p\neq 2$. Using first the embedding of $H^1(\Om)$ in $L^p(\Om)$, then the previous estimate for $p=2$, we infer that
\[
\|\psi \|_{W^{1,p}(\Om)}\leq K_5\|\psi \|_{H^{2}(\Om)}\leq K_6\|\omega\|_{L^\infty(\Om)},
\]
so that, using again \eqref{ineq:sam} we obtain
\[
\|\psi \|_{W^{2,p}(\Om)}\leq  K_6 \|\omega\|_{L^\infty(\Om)} .
\]
 Since $u=\nabla^\perp \psi$, the proof is complete.
\end{proof}
In particular,  the previous estimates also yield that $u$ is bounded, with $\|u\|_{L^\infty(\Om)}\leq  C(\Om)\|\omega\|_{L^\infty(\Om)} $. However, in order to exemplify in a simpler setting our techniques, which will be used for the log-Lipschitz regularity, we present below an alternative proof of this fact. It is based only on the Biot-Savart law, without using theorems from the elliptic theory in domains with corners.  In the formula \eqref{eq:BS-2}, we recognize a Biot-Savart  type law in $D$ corresponding to a vorticity given by
\[ \tilde{\omega}(z)= \omega(\mathcal{T}^{-1}(z))\det (D\mathcal{T}^{-1}(z)) \leq C \|\omega\|_{L^\infty} |z-\Tc(x_{k})|^{-2\al_{k}}.\]
In view of \eqref{estimate:K1}  a natural assumption in order  to show that $K_D[\tilde{\omega}]$ is uniformly bounded is that  $\tilde{\omega}$ belongs to $L^q$ for some $q>2$. However,  $\tilde{\omega}$ can possibly not belong to $L^q$, for none of $q>2$, because $\alpha_{k}\geq 1/2$. The main idea of the following proposition is to take advantage of $D\Tc^T(\Tc^{-1}(y))  = \mathcal{O}(|y-\Tc(x_{k})|^{\al_{k}})$ in \eqref{eq:BS-2} to obtain the following uniform estimate on $u$.

\begin{proposition} \label{prop:uniform-bound}
Let $\Om$ satisfy  (H). Let $u$ be a field given by \eqref{eq:BS-1}, with $\omega \in L^\infty(\Om)$.  Then there exists a constant $C$ depending only on $\Om$ such that
\[
\|u\|_{L^\infty(\Om)}\leq C\|\omega\|_{L^\infty(\Om)} .
\]
\end{proposition}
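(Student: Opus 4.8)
The plan is to estimate directly the integral representation \eqref{eq:BS-2}. Writing
\[
u(\Tc^{-1}(y))=D\Tc^T(\Tc^{-1}(y))\int_{D} K_D(y,z)\,\tilde\omega(z)\d z,\qquad \tilde\omega(z):=\omega(\Tc^{-1}(z))\det(D\Tc^{-1}(z)),
\]
I would first record the three pointwise bounds that drive the whole argument: the kernel bound $|K_D(y,z)|\leq C/|y-z|$ from \eqref{estimate:K1}; the vorticity bound $|\tilde\omega(z)|\leq C\|\omega\|_{L^\infty}|z-\Tc(x_{k})|^{-2\al_{k}}$ for $z\in B(\Tc(x_{k}),\delta)$ together with $|\tilde\omega(z)|\leq C\|\omega\|_{L^\infty}$ away from the corners; and the decay of the prefactor, $|D\Tc^T(\Tc^{-1}(y))|\leq C|y-\Tc(x_{k})|^{\al_{k}}$ for $y\in B(\Tc(x_{k}),\delta)$ by \eqref{est:DT1}, and $|D\Tc^T(\Tc^{-1}(y))|\leq C$ globally by \eqref{est:DT2}. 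Since $\delta<\frac13\min_{i\neq j}|\Tc(x_{i})-\Tc(x_{j})|$, the balls $B(\Tc(x_{k}),\delta)$ are pairwise disjoint and separated, so it suffices to bound, for each fixed $y$, the quantity $|D\Tc^T(\Tc^{-1}(y))|\int_{D}|y-z|^{-1}|\tilde\omega(z)|\d z$, splitting the $z$-integral according to the corner regions.

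The routine contributions are the following. If $y$ lies in the core $D\setminus\cup_{k}B(\Tc(x_{k}),\delta)$, then the prefactor is at most $C$; splitting the $z$-integral into $\cup_{k}B(\Tc(x_{k}),\delta/2)$ and its complement, on the former $|y-z|\geq\delta/2$ so the kernel is bounded while $|z-\Tc(x_{k})|^{-2\al_{k}}$ is integrable (as $\al_{k}<1$), and on the latter $\tilde\omega$ is bounded while $|y-z|^{-1}$ is integrable over $D$; both yield $C\|\omega\|_{L^\infty}$. Likewise, the cross terms where $y$ is near one corner and $z$ near a different corner, or away from all corners, are harmless: there either the kernel or $\tilde\omega$ is bounded and the remaining factor integrable, while the prefactor contributes at most $C\delta^{\al_{k}}$.

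The only delicate region, which is the main obstacle, is when $y$ and $z$ are close to the \emph{same} corner $\Tc(x_{k})$, where the singularity of $\tilde\omega$ meets that of $K_D$. Setting $\rho:=|y-\Tc(x_{k})|\leq\delta$, I would control
\[
\rho^{\al_{k}}\int_{B(\Tc(x_{k}),\delta)}\f{|z-\Tc(x_{k})|^{-2\al_{k}}}{|y-z|}\d z
\]
by a dyadic decomposition in $|z-\Tc(x_{k})|$ about $\rho$. On $\{|z-\Tc(x_{k})|<\rho/2\}$ one has $|y-z|\geq\rho/2$ and $\int_{|z-\Tc(x_{k})|<\rho/2}|z-\Tc(x_{k})|^{-2\al_{k}}\d z\leq C\rho^{2-2\al_{k}}$; on the annulus $\{\rho/2\leq|z-\Tc(x_{k})|\leq 2\rho\}$ one replaces $|z-\Tc(x_{k})|^{-2\al_{k}}$ by $\rho^{-2\al_{k}}$ and uses $\int_{B(y,3\rho)}|y-z|^{-1}\d z\leq C\rho$; and on $\{2\rho\leq|z-\Tc(x_{k})|\leq\delta\}$ one has $|y-z|\geq|z-\Tc(x_{k})|/2$, which leaves $\int_{2\rho}^{\delta}r^{-2\al_{k}}\d r$. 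The first two pieces each produce a factor $\rho^{1-2\al_{k}}$, while the third gives $C\rho^{1-2\al_{k}}$ if $\al_{k}>1/2$, a constant if $\al_{k}<1/2$, and $C|\ln\rho|$ if $\al_{k}=1/2$. After multiplication by the prefactor $\rho^{\al_{k}}$ the total is bounded by $C(\rho^{\al_{k}}+\rho^{1-\al_{k}}+\rho^{\al_{k}}|\ln\rho|)$, which stays uniformly bounded for $\rho\in(0,\delta]$ precisely because $\frac12\leq\al_{k}<1$. This is exactly the point where the decay $|D\Tc^T(\Tc^{-1}(y))|\leq C|y-\Tc(x_{k})|^{\al_{k}}$ of the conformal factor compensates the fact that $\tilde\omega$ fails to lie in $L^{q}$ for any $q>2$. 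Summing the finitely many corner contributions then gives $\|u\|_{L^\infty(\Om)}\leq C\|\omega\|_{L^\infty(\Om)}$.
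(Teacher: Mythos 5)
Your proof is correct and follows essentially the same route as the paper's: the same case analysis in the position of $y$, the same handling of the routine regions (kernel bounded when $z$ stays away from $y$, push-forward vorticity bounded when $z$ stays away from the corners), and the same reduction to the key inequality \eqref{ineq:u-2}, in which the prefactor decay $|D\Tc^T(\Tc^{-1}(y))|\leq C|y-\Tc(x_{k})|^{\al_{k}}$ compensates the blow-up $|z-\Tc(x_{k})|^{-2\al_{k}}$. The only difference is the elementary verification of \eqref{ineq:u-2}: you use an annular decomposition at the scale $\rho=|y-\Tc(x_{k})|$ (which produces a harmless borderline logarithm when $\al_{k}=1/2$), whereas the paper splits according to which of $|y-z|$, $|z-\Tc(x_{k})|$ is larger via $|y-\Tc(x_{k})|^{\al_{k}}\leq 2^{\al_{k}}\max\big(|y-z|^{\al_{k}},|z-\Tc(x_{k})|^{\al_{k}}\big)$; both computations are valid for all $\al_{k}\in(0,1)$.
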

\begin{proof}
After changing variable, we want to prove that for all $y\in D$
\begin{equation*}
|u(\mathcal{T}^{-1}(y))| \leq |D\mathcal{T}(\mathcal{T}^{-1}(y))| \int_{D} \left|K_D(y,z)\right||\omega(\mathcal{T}^{-1}(z))|\det(D\mathcal{T}^{-1}(z))\d z \leq C\| \omega\|_{L^\infty(\Omega)}.
\end{equation*}

Let $0<\delta<1$ be defined in Proposition \ref{prop T} and \eqref{est:DT1}.

\medskip

\textbf{Case 1: }$y\notin \cup_{k=1}^N B(\Tc(x_{k}),\delta/2)$.

By \eqref{est:DT2} we have $|D\mathcal{T}(\mathcal{T}^{-1}(y))|\leq C,$ so that
\begin{equation*}
\begin{split}
|u(\mathcal{T}^{-1}(y))|\leq & C \int_{D} \left|K_D(y,z)\right||\omega(\mathcal{T}^{-1}(z))|\det(D\mathcal{T}^{-1}(z))\d z\\
\leq & C \sum_{k=1}^N \int_{D\cap B(\Tc(x_{k}),\delta/4)} \left|K_D(y,z)\right||\omega(\mathcal{T}^{-1}(z))|\det(D\mathcal{T}^{-1}(z))\d z\\
&+C \int_{D\setminus \cup_{k=1}^N B(\Tc(x_{k}),\delta/4)} \left|K_D(y,z)\right||\omega(\mathcal{T}^{-1}(z))|\det(D\mathcal{T}^{-1}(z))\d z\\
\leq & I_1+I_2.
\end{split}
\end{equation*}
Observe that if $y \notin \cup_{k=1}^N B(\Tc(x_{k}),\delta/2)$ and $z\in B(\Tc(x_{k}),\delta/4)$ then $|y-z|\geq \delta/4$. Therefore by \eqref{estimate:K1} we obtain
$$I_1\leq \frac{4CN}{\delta} \int_{D}|\omega(\mathcal{T}^{-1}(z))| \det(D\mathcal{T}^{-1}(z))\d z= C\|\omega\|_{L^1(\Omega)},$$
where we have changed variable back.

Next, using that $\det(D\mathcal{T}^{-1}(z))$ is bounded for $z\in D\setminus \cup_{k=1}^N B(\Tc(x_{k}),\delta/4)$ we get
\begin{equation*}
\begin{split}
I_2&\leq C \|\omega\|_{L^\infty(\Omega)} \int_{D} \left|K_D(y,z)\right|\d z,
\end{split}
\end{equation*}
therefore by \eqref{estimate:K1} we get
\begin{equation*}
I_2\leq C \|\omega\|_{L^\infty(\Omega)}.
\end{equation*}

Finally, we have proved that
\begin{equation*}
|u(T^{-1}(y))|
\leq C \|\omega\|_{L^\infty(\Omega)},\quad \forall y\in D\setminus \cup_{k=1}^N B(\Tc(x_{k}),\delta/2).
\end{equation*}
\medskip

\textbf{Case 2: }$y\in B(\Tc(x_{k}),\delta/2)$ for some $k\in\{1,\ldots,N\}.$

Then we use \eqref{est:DT1} to write
\begin{equation*}
\begin{split}
|u(\mathcal{T}^{-1}(y))|&\leq C |\Tc(x_{k})-y|^{\al_{k}}
 \int_{D\cap B(\Tc(x_{k}),\delta)} \left|K_D(y,z)\right||\omega(\mathcal{T}^{-1}(z))|\det(D\mathcal{T}^{-1}(z))\d z\\
&+C  |\Tc(x_{k})-y|^{\al_{k}} \int_{D\cap B(\Tc(x_{k}),\delta)^c} \left|K_D(y,z)\right||\omega(\mathcal{T}^{-1}(z))|\det(D\mathcal{T}^{-1}(z))\d z.
\end{split}
\end{equation*}
Arguing as before, using that for $y\in B(\Tc(x_{k}),\delta/2)$ and $z\in B(\Tc(x_{k}),\delta)^c$ we have $|z-y|\geq \delta/2$, so we readily estimate the second integral of the right-hand side by
$$C \|\omega\|_{L^1(\Omega)}.$$

Therefore, using the bound for $\det(D\mathcal{T}^{-1})$ provided by Proposition \ref{prop T} when $z\in B(\Tc(x_{k}),\delta)$ we can actually reduce the problem to prove that
 \begin{equation}\label{ineq:u-2}
|y_{k}-y|^{\alpha_k} \int_{D} \f{1}{|y-z|}|z-y_{k}|^{-2{\alpha_k}}\d z \leq C, \quad \textrm{for} \quad y\in B(y_{k},\delta/2).
\end{equation}

Writing that
\begin{eqnarray*}
|y-y_{k}|^{{\alpha_k}}\leq (|y-z|+|z-y_{k}|)^{\alpha_k} \leq (2 \max (|y-z|,|z-y_{k}|))^{\alpha_k}  =  2^{\alpha_k} \max (|y-z|^{\alpha_k},|z-y_{k}|^{\alpha_k})
\end{eqnarray*}
we prove \eqref{ineq:u-2} by computing
 \begin{equation*}
\begin{split}
  &|y_{k}-y|^{\alpha_k} \int_{D} \f{1}{|y-z|}|z-y_{k}|^{-2{\alpha_k}}\d z\\
&\leq 2^{\alpha_k}\int_{D \cap \{|z-y_{k}|\leq |y-z|\}}|y-z|^{{\alpha_k}-1}|z-y_{k}|^{-2{\alpha_k}}\d z
+2^{\alpha_k}\int_{D\cap \{|z-y_{k}|\geq |y-z|\}}|y-z|^{-1}|z-y_{k}|^{-{\alpha_k}}\d z\\
&\leq 2^{\alpha_k} \int_{ D}|z-y_{k}|^{-1-{\alpha_k}}\d z+2^{\alpha_k}\int_{D}|y-z|^{-1-{\alpha_k}}\d z\\
&\leq C,
\end{split}
\end{equation*}
because $\alpha_{k}<1$.

This yields the conclusion of Proposition \ref{prop:uniform-bound}.
\end{proof}

\begin{remark}By similar arguments one can also establish the estimate of Proposition \ref{prop:W1p-bound} for a range of small values of  $p\in(1,p_c)$.
\end{remark}

The following proposition concerning the log-Lipschitz type regularity for the  field $(D\mathcal{T}\circ \mathcal{T}^{-1})u\circ \mathcal{T}^{-1}$ is original and  will be crucial for the uniqueness result.
\begin{proposition}\label{log-lip}
Let  $\Om$ satisfy (H) and $u$ given by \eqref{eq:BS-1}, with $\omega \in L^\infty(\Om)$.  Then the map
$$U: y\in D\mapsto D\mathcal{T}(\mathcal{T}^{-1}(y))u(\mathcal{T}^{-1}(y))$$ is log-Lipschitz on $D$. More precisely, there exists a constant depending only on $\Omega$ such that
\begin{equation}\label{ineq:loglip}
|U(y_1)-U(y_2)|\leq C \|\omega\|_{L^\infty(\Omega)} h (|y_1-y_2|),\quad \forall (y_1,y_2)\in D^2,
\end{equation}
where $h:\R_+\to \R_+$ is defined in \eqref{defi h}.
\end{proposition}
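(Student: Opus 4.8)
The plan is to rewrite $U$ as a single integral operator and to reduce \eqref{ineq:loglip} to an $L^1$ bound on the difference of its kernel. Setting $a(y):=|\det D\Tc(\Tc^{-1}(y))|$ and using Remark \ref{DTDTt} together with $\det(D\Tc^{-1}(z))=1/a(z)$ (positive since $\Tc$ is holomorphic), formula \eqref{eq:BS-2} becomes
\[
U(y)=a(y)\int_D K_D(y,z)\,\frac{\omega(\Tc^{-1}(z))}{a(z)}\d z .
\]
Since $\|\omega\circ\Tc^{-1}\|_{L^\infty}=\|\omega\|_{L^\infty(\Om)}$, it suffices to bound $\int_D|\mathcal K(y_1,z)-\mathcal K(y_2,z)|\d z\le C\,h(|y_1-y_2|)$ for the kernel $\mathcal K(y,z):=a(y)K_D(y,z)/a(z)$. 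I write $d:=|y_1-y_2|$, $\tilde y:=(y_1+y_2)/2$, $y_k:=\Tc(x_k)$, and may assume $d$ smaller than a fixed fraction of $\delta$: for $d$ bounded below, \eqref{ineq:loglip} follows from Proposition \ref{prop:uniform-bound} since $h$ is then bounded below. The two properties of $a$ that drive the proof come from Proposition \ref{prop T} and \eqref{est:DT1}--\eqref{est:DT1-bis}: near $y_k$ one has $c|y-y_k|^{2\alpha_k}\le a(y)\le C|y-y_k|^{2\alpha_k}$, $|\nabla a(y)|\le C|y-y_k|^{2\alpha_k-1}$ and $1/a(z)\le C|z-y_k|^{-2\alpha_k}$; in particular, since $2\alpha_k\ge1$, $a$ is uniformly Lipschitz on $\overline D$ and bounded below off the corners.

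When $\tilde y$ stays away from every corner the estimate is a weighted version of Lemma \ref{est:K3}. There $a(y_1),a(y_2)$ and $1/a$ are comparable to constants near $y_1,y_2$ and $|a(y_1)-a(y_2)|\le Cd$; splitting $\mathcal K(y_1,\cdot)-\mathcal K(y_2,\cdot)$ into a $K_D$-difference piece and an $a$-difference piece, the latter is $\le Cd\int_D|K_D(y_2,z)|\,a(z)^{-1}\d z\le Cd$, and the former is controlled by Lemma \ref{est:K3} away from the corners and by \eqref{estimate:K2} (where $|y_i-z|$ is bounded below) against the locally integrable weight $|z-y_j|^{-2\alpha_j}$ near each corner $y_j$. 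This yields $Ch(d)$.

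The heart of the matter is when $\tilde y\in B(y_k,\delta)$, where $a$ degenerates. Here the triangle inequality $|\mathcal K(y_1,\cdot)-\mathcal K(y_2,\cdot)|\le|\mathcal K(y_1,\cdot)|+|\mathcal K(y_2,\cdot)|$ is too lossy: in the zone $|y_1-y_k|\simeq|y_2-y_k|\simeq|z-y_k|\simeq d$ each term is only of order $d^{\alpha_k}$, far worse than $h(d)$, so the cancellation must be kept. I therefore split the $z$–integral into $B(\tilde y,2d)$ and its complement. On the complement the segment $[y_1,y_2]$ is separated from $z$, so I use $|\mathcal K(y_1,z)-\mathcal K(y_2,z)|\le d\sup_{y\in[y_1,y_2]}|\partial_y\mathcal K(y,z)|$ with $|\partial_y\mathcal K|\le C\big(|\nabla a(y)|\,|K_D|+a(y)\,|\partial_yK_D|\big)a(z)^{-1}$ and $|K_D|\le C|y-z|^{-1}$, $|\partial_yK_D|\le C|y-z|^{-2}$; integrating in polar coordinates (separating $|z-y_k|\lessgtr|\tilde y-y_k|$), the exact matching $a(y)\simeq|y-y_k|^{2\alpha_k}$, $a(z)^{-1}\simeq|z-y_k|^{-2\alpha_k}$ produces $Ch(d)$, the logarithm arising either from the borderline exponent $2\alpha_k=1$ or, when $|\tilde y-y_k|\gg d$, from the usual factor $\ln(|\tilde y-y_k|/d)$. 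On $B(\tilde y,2d)$ I bound the two terms separately; each reduces to $a(y_i)\int_{B(\tilde y,2d)}|z-y_k|^{-2\alpha_k}|y_i-z|^{-1}\d z$, which, after rescaling by $|y_i-y_k|$ and using $a(y_i)\simeq|y_i-y_k|^{2\alpha_k}$ exactly as in the derivation of \eqref{ineq:u-2}, is $\le C|y_i-y_k|\big(1+|\ln|y_i-y_k||\big)\le Ch(d)$. The other corners $y_j$, $j\ne k$, are harmless since there $|y_i-z|\ge\delta/2$.

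The main obstacle is precisely this degenerate near-corner regime: one cannot estimate the two halves of the kernel difference independently, and the argument closes only because the rate at which $a(y)=|\det D\Tc(\Tc^{-1}(y))|$ vanishes at the corner matches exactly the rate at which $1/a(z)=\det(D\Tc^{-1}(z))$ blows up — a cancellation visible only when the full kernel $\mathcal K$ is differentiated in $y$ on the region where $z$ is separated from $y_1,y_2$. Tracking the logarithm uniformly (both at the threshold angle $\theta_k=\pi/2$ and through $\ln(|\tilde y-y_k|/d)$) is the delicate bookkeeping deferred to Section \ref{sect 5}.
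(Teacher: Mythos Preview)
Your strategy is the paper's: write $U(y)=a(y)\int_D K_D(y,z)\omega(\Tc^{-1}(z))a(z)^{-1}dz$ with $a=\det(D\Tc\circ\Tc^{-1})$, split off a ball $B(\tilde y,cd)$ where the two pieces of the kernel are bounded separately, and on the complement use the mean-value/difference estimate together with the matching $a(y)\simeq|y-y_k|^{2\alpha_k}$, $a(z)^{-1}\lesssim|z-y_k|^{-2\alpha_k}$ and a further split according to the size of $|z-y_k|$. The only cosmetic difference is that the paper writes $a(y_1)K_D(y_1,\cdot)-a(y_2)K_D(y_2,\cdot)=(a(y_1)-a(y_2))K_D(y_1,\cdot)+a(y_2)(K_D(y_1,\cdot)-K_D(y_2,\cdot))$ and applies the mean-value theorem to $a$ alone (see \eqref{MVT}, \eqref{ineq:dt}) together with \eqref{estimate:K2}, whereas you differentiate the full kernel $\mathcal K$; the resulting integrands are the same.

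There is one genuine slip. For the near piece you claim
\[
a(y_i)\int_{B(\tilde y,2d)}|z-y_k|^{-2\alpha_k}|y_i-z|^{-1}\,dz\ \le\ C\,|y_i-y_k|\big(1+\big|\ln|y_i-y_k|\big|\big)\ \le\ C\,h(d).
\]
The last inequality is false whenever $|y_i-y_k|\gg d$ (e.g.\ $|y_i-y_k|\simeq\delta$ with $d$ tiny), since $h$ is increasing on $(0,1)$; and the intermediate bound is not what the rescaling produces. The correct estimate is simply $Cd$: if $|\tilde y-y_k|\ge 5d$ then $|z-y_k|\simeq|\tilde y-y_k|$ on $B(\tilde y,2d)$, the weights cancel, and one is left with $\int_{B(\tilde y,2d)}|y_i-z|^{-1}dz\simeq d$; if $|\tilde y-y_k|<5d$ then $B(\tilde y,2d)\subset B(y_i,3d)\cap B(y_k,7d)$ and the splitting of \eqref{ineq:u-2} gives $Cd$ directly. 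This is exactly how the paper bounds $U_1,U_2$ in Cases~1 and~2 of Section~\ref{sect 5}. With this correction your sketch goes through; the far-piece computation with the split $|z-y_k|\lessgtr|\tilde y-y_k|$ does close, provided that in the outer shell $|z-y_k|\gtrsim|\tilde y-y_k|$ you retain the factor $|z-y_k|^{-1}$ (rather than discarding it) so as to land on $\int_D|y-z|^{-1}|z-y_k|^{-1}dz\lesssim 1+|\ln d|$ and $\int_{D\setminus B(\tilde y,2d)}|y-z|^{-2}dz\lesssim 1+|\ln d|$.
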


The proof of Proposition \ref{log-lip} follows the same idea as the one of Proposition \ref{prop:uniform-bound}: we adapt the classical proof in smooth domains (namely, the first estimate of Lemma \ref{est:K3}) to the case where we have a finite number of corners. Close to the corners, we use the precise behavior of $D\Tc$ (see Proposition \ref{prop T}) in order to counterbalance the divergence of $\det( D\Tc^{-1})$. We postpone the full proof in Section \ref{sect 5}. We stress that we are not able to establish such a log-Lipschitz regularity for $u$.

\section{Proof of Theorem \ref{thm:main}}\label{sect 4}
Let $\Om$ satisfy (H). Let $\omega_{0}\in L^\infty(\Om)$ and $u_0$ satisfying
\begin{equation*}
u_0 \in L^2(\Omega),  \quad  \curl u_0 =\omega_{0}, \quad \div u_0 = 0 , \quad u_0 \cdot n \vert_{\pa \Omega} = 0.
\end{equation*}
By the main result in \cite{taylor} for convex domains and \cite{GL} for general domains, we know that there exists a global weak solution on $\Om$ such that
\begin{equation*}
u\in L^\infty (\R_+;L^2({\Om})),\quad \omega \in L^\infty(\mathbb{R}_+\times \Om).
\end{equation*}
Let us make a short comment about the tangency condition in \cite{GL}. Without any assumption about the regularity of the boundary, the tangency condition in \cite{GL} reads as
\begin{equation}\label{imperm}
 \int_\Omega u \cdot h = 0  \quad \text{for all } h \in G(\Omega):=\{w\in L^2(\Omega) \ : \ w=\nabla p, \ \text{ for some } p\in H^1_{\loc}(\Omega)\}.
\end{equation}
However, in our case, the assumption (H) and the $W^{1,p}$ estimate of Proposition \ref{prop:W1p-bound} imply that $u(t,\cdot)$ is continuous up to the boundary of $\Om$. With such a regularity, the condition \eqref{imperm} amounts to $u(t,\cdot)\cdot n=0$  on $\partial \Om$ except at the corners. Noticing that the normal vectors on each side at a given corner span $\R^2$, we actually have $u(t,\cdot)=0$ at each corner.   

The velocity field $u$ satisfies the regularity properties of Section \ref{sect 3} (Propositions \ref{prop:W1p-bound}, \ref{prop:uniform-bound} and \ref{log-lip}). In the next subsection we introduce the Lagrangian flow associated to $u$ before turning to the proof of uniqueness.

\subsection{The Lagrangian flow}

Let $(u,\omega)$ be as above. The purpose of this subsection is to establish the following
\begin{proposition}\label{prop:lagrangian-formulation}
There exists $X:\R_+\times \Omega\to \Omega$ such that for almost every $x\in \Omega$, $t\mapsto X(t,x)$ belongs to $W^{1,\infty}(\R_+,\Omega)$ and satisfies 
\begin{equation*}
 \displaystyle X(t,x)=x+\int_0^t u(s,X(s,x))\,ds,\quad \forall t\in \R_+.
\end{equation*}Moreover, for all $t\in \R_+$ the map $X(t,\cdot)$ preserves the Lebesgue's measure on $\Omega$.

Finally, we have $$\omega(t)=X(t,\cdot)_\#\omega_0, \quad \text{for a.e. }t\in\R_+, $$
in the sense that for a.e. $t\in \R_+$ we have $\int_\Omega \omega(t,x)\varphi(x)\,dx=\int_\Omega \omega_0(x)\varphi(X(t,x))\,dx$ for all $\varphi\in C_c(\Omega)$.
\end{proposition}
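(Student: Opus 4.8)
The plan is to transfer everything to the unit disk, where the pushed-forward field is log-Lipschitz. Set
$$U(t,y) := D\Tc(\Tc^{-1}(y))\, u(t,\Tc^{-1}(y)), \qquad y\in D.$$
By Proposition \ref{prop:uniform-bound} together with \eqref{est:DT2}, $U$ is bounded on $\R_+\times D$ with $\|U\|_{L^\infty}\le C\|\omega_0\|_{L^\infty(\Om)}$; by Proposition \ref{log-lip} each $U(t,\cdot)$ is log-Lipschitz on $D$ with the modulus $h$ of \eqref{defi h}, uniformly in $t$. Rewriting $U$ through the Biot--Savart law \eqref{eq:BS-2} and Remark \ref{DTDTt} gives $U(t,y)=|\det D\Tc(\Tc^{-1}(y))|\int_D K_D(y,z)\,\tilde\omega(t,z)\,\d z$, so Lemma \ref{est:K4} yields $U(t,y)\cdot y=0$ whenever $|y|=1$: the field $U$ is tangent to $\partial D$. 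Consequently the classical Cauchy--Lipschitz--Osgood theory for bounded, time-measurable, log-Lipschitz fields (as in \cite[Theo.~3.1, Chapter~2]{marchioro-pulvirenti}) provides a unique flow $Y$, continuous on $\R_+\times\overline D$ and Lipschitz in time, of $U$; tangency together with uniqueness forces both $\partial D$ and $D$ to be invariant, so $Y(t,\cdot)$ is a homeomorphism of $\overline D$ mapping $D$ onto $D$. I then define $X(t,x):=\Tc^{-1}(Y(t,\Tc(x)))$.

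Next come the ODE and the time regularity. Fix $x\in\Om$ and set $y=\Tc(x)\in D$. Since $t\mapsto Y(t,y)$ is continuous and stays in the open disk, its image over any $[0,T]$ is a compact subset of $D$ on which $\Tc^{-1}$ is $C^1$; hence $t\mapsto X(t,x)$ is absolutely continuous and, for a.e.\ $t$,
$$\partial_t X = D\Tc^{-1}(Y)\,\partial_t Y = D\Tc^{-1}(Y)\,U(t,Y) = D\Tc^{-1}(Y)\,D\Tc(\Tc^{-1}(Y))\,u(t,\Tc^{-1}(Y)) = u(t,X),$$
where I used $D\Tc^{-1}(Y)=[D\Tc(\Tc^{-1}(Y))]^{-1}$. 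Integrating yields the claimed integral equation, and since $\|u\|_{L^\infty}\le C\|\omega_0\|_{L^\infty}$ the map $t\mapsto X(t,x)$ is Lipschitz, hence in $W^{1,\infty}(\R_+,\Om)$.

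The crux is measure preservation. Writing $u=\nabla^\perp\psi$ with $\Delta\psi=\omega$, $\psi|_{\partial\Om}=0$, and setting $\Psi:=\psi\circ\Tc^{-1}$, a direct computation using the conformality of $\Tc$ (so that $D\Tc$ commutes with the rotation $J$ and $(D\Tc^{-1})^T=(\det D\Tc)^{-1}D\Tc$) gives the pointwise identity $\rho\,U=\nabla^\perp\Psi$ on $D$, where $\rho:=|\det D\Tc^{-1}|=(\det D\Tc(\Tc^{-1}(\cdot)))^{-1}>0$. In particular $\div(\rho U)=0$. By the change of variables $x=\Tc^{-1}(y)$, preservation of Lebesgue measure on $\Om$ by $X(t,\cdot)$ is equivalent to preservation of the measure $\d\mu:=\rho\,\d y$ on $D$ by $Y(t,\cdot)$. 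To prove the latter I note that both the push-forward $\nu_t:=Y(t,\cdot)_\#\mu$ and the time-independent measure $\mu$ are weak solutions of the continuity equation $\partial_t m+\div(Um)=0$ with datum $\mu$: the former because $Y$ is the flow of $U$, the latter because $\div(\rho U)=0$. Uniqueness of measure-valued solutions of the continuity equation driven by a bounded log-Lipschitz field (the Osgood condition makes the characteristics, namely $Y$, unique) then forces $\nu_t=\mu$ for all $t$, which is the desired invariance. I expect this step---reconciling the failure of $U$ to be divergence free with the invariance of Lebesgue measure---to be the main technical point: the non-$C^1$ character of $Y$ rules out a naive Jacobian computation, and the blow-up of $\rho$ at the corner images forces the argument to be run in conservative form with the weight $\rho$.

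Finally, the transport of the vorticity. The weak Euler solution satisfies $\partial_t\omega+u\cdot\nabla\omega=0$ weakly, and setting $w:=\omega\circ\Tc^{-1}$ one checks, using $D\Tc^{-1}(y)\,U(t,y)=u(t,\Tc^{-1}(y))$, that $w$ is a bounded weak solution of $\partial_t w+U\cdot\nabla w=0$ on $D$ (equivalently $\partial_t(\rho w)+\div(\rho w\,U)=0$). Since $U$ is bounded and log-Lipschitz, bounded weak solutions of this transport equation are unique and coincide with $w_0\circ Y(t,\cdot)^{-1}$, that is $w(t)=Y(t,\cdot)_\# w_0$ by the invariance of $\mu$ just proved. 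Undoing the conformal change of variables gives $\omega(t)=X(t,\cdot)_\#\omega_0$ for a.e.\ $t$ in the stated duality sense, which completes the plan.
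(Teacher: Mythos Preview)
Your construction of the flow $X$ via the auxiliary flow $Y$ on the disk is exactly the paper's Lemma~4.2; the only cosmetic difference is that the paper shows interior trajectories never reach $\partial D$ by a Gronwall estimate on $1-|Y(t,y)|$ rather than invoking backward Osgood uniqueness as you do.

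Where you genuinely diverge is in the last two steps. For measure preservation the paper simply says ``because $u$ is divergence free'', implicitly relying on the $W^{1,p}$ regularity of $u$ (Proposition~\ref{prop:W1p-bound}) and the DiPerna--Lions framework once $u$ is extended to $\R^2$; your argument via the weighted continuity equation $\partial_t m+\div(Um)=0$ on $D$ with $m=\rho\,\d y$ is more explicit and stays entirely on the disk. For the representation $\omega(t)=X(t,\cdot)_\#\omega_0$, the paper takes a rather different route: it extends $\omega$ by zero and $u$ by a $W^{2,p}$ Sobolev extension of the stream function to all of $\R^2$, proves (Lemma~4.3, with a delicate two-parameter cut-off near $\partial\Omega$ and near the corners, using the tangency Lemma~\ref{est:K4} and the log-Lipschitz bound to kill the boundary layer) that the extended vorticity solves the linear transport equation on $\R_+\times\R^2$, and then invokes DiPerna--Lions uniqueness on $\R^2$. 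You instead pull the interior vorticity equation back to $D$ and use uniqueness for the transport equation on $D$ with the bounded log-Lipschitz tangential field $U$. Both approaches are valid; the paper's buys an off-the-shelf uniqueness theorem at the cost of the technical extension lemma, while yours is more self-contained but leans on uniqueness statements (for measure-valued solutions of the continuity equation and bounded solutions of the transport equation) on a bounded domain with a tangential Osgood field, which are correct but slightly less standard to cite. One point worth making precise in your write-up: the weak vorticity equation you start from holds only for test functions in $C^\infty_c([0,\infty)\times\Omega)$ (obtained from the velocity formulation by taking $\varphi=\nabla^\perp\psi$), and after the change of variables it becomes $\int\!\!\int_D w(\partial_t\psi+U\cdot\nabla\psi)\rho\,\d y\,\d t=0$ with $\psi\in C^\infty_c$; it is the identity $\div(\rho U)=0$ that lets you pass from this weighted form to the unweighted one.
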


For smooth domains $\Omega$ and for velocity fields $u$ satisfying the regularity properties of Propositions  \ref{prop:W1p-bound} and  \ref{prop:uniform-bound} this was proved by DiPerna and Lions \cite{dip-lions} (p. 546). To the author's knowledge, such results are not available in the literature for general non-smooth domains. 

In order to prove  Proposition \ref{prop:lagrangian-formulation} we will proceed in three steps: first we will establish in Lemma \ref{lemma:existence-flow} the existence of the flow $X$ associated to $u$ such that the flow trajectories never reach the boundary. Then we will establish in Lemma \ref{lemma:transport-full}  that the extension of $\omega$ outside $\Omega$ satisfies a linear transport equation on $\R_+\times \R^2$ with a velocity field satisfying the usual assumptions ensuring uniqueness of the solution. We will finally conclude that $\omega$ is constant along the flow trajectories.

\medskip

\begin{lemma}\label{lemma:existence-flow}
There exists $X:\R_+\times \Omega\to \Omega$ such that for every $x\in \Omega$, $t\mapsto X(t,x)$ is the unique function belonging to $W^{1,\infty}(\R_+,\Omega)$ and satisfying
\begin{equation*}
 \displaystyle X(t,x)=x+\int_0^t u(s,X(s,x))\,ds,\quad \forall t\in \R_+.
\end{equation*}Moreover, for all $t\in \R_+$ the map $X(t,\cdot)$ preserves the Lebesgue's measure on $\Omega$.
\end{lemma}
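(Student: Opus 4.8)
The plan is to transport the entire problem to the unit disk through the biholomorphism $\Tc$. On $D$ the pushed-forward velocity
$$U(y)=D\Tc(\Tc^{-1}(y))\,u(\Tc^{-1}(y))$$
is log-Lipschitz by Proposition \ref{log-lip}, and bounded since $|D\Tc|$ is controlled by \eqref{est:DT2} and $u$ is bounded by Proposition \ref{prop:uniform-bound}. If $X$ is the flow we seek on $\Omega$, then $Y(t,y):=\Tc(X(t,\Tc^{-1}(y)))$ should solve the autonomous equation $\partial_t Y=U(Y)$ on $D$, as follows from the chain rule together with $\partial_t X=u(X)$ and the identity $U\circ\Tc=D\Tc\cdot u$. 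Accordingly I would first build and analyse the flow $Y$ of $U$ on $D$, and then recover $X$ by setting $X(t,x):=\Tc^{-1}(Y(t,\Tc(x)))$.

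For the flow $Y$: as $U$ is bounded and continuous on $\overline D$, Peano's theorem gives existence, and as $U$ is log-Lipschitz with the Osgood modulus $h(r)=r(1+|\ln r|)$ satisfying $\int_0^1 dr/h(r)=+\infty$, the Osgood criterion gives uniqueness of trajectories forward and backward in time; boundedness of $U$ and of $D$ make the flow global on $\R_+$. The crucial geometric fact is that the open disk is invariant. Indeed, combining \eqref{eq:BS-2} with Remark \ref{DTDTt} yields $U(y)=|\det D\Tc(\Tc^{-1}(y))|\,V(y)$ with $V(y):=\int_D K_D(y,z)\,\tilde\omega(z)\,dz$, so Lemma \ref{est:K4} gives $U(y)\cdot y=0$ for $|y|=1$, i.e. $U$ is tangent to $\partial D$. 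Thus $\partial D$ is a union of trajectories, and by uniqueness no trajectory starting at an interior point can reach $\partial D$ (in particular it never attains the corner images $\Tc(x_k)\in\partial D$) in finite time. Hence $Y(t,\cdot)$ maps $D$ into itself and $X(t,x)$ stays in $\Omega$, away from $\partial\Omega$.

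Since $Y(t,\Tc(x))$ remains in the open disk, where $\Tc^{-1}$ is holomorphic, the chain rule shows that $t\mapsto X(t,x)$ is $C^1$ with $\partial_t X=D\Tc^{-1}(Y)\,U(Y)=u(X)$; boundedness of $u$ then gives $X(\cdot,x)\in W^{1,\infty}(\R_+,\Omega)$ and, by the fundamental theorem of calculus, the integral equation. Uniqueness of $X$ follows from the reverse reduction: any $W^{1,\infty}(\R_+,\Omega)$ solution $\tilde X$ of the integral equation produces, through the Lipschitz map $\Tc$ (recall $|D\Tc|\leq C$), an absolutely continuous $\tilde Y=\Tc(\tilde X)$ valued in $D$ and solving $\partial_t\tilde Y=U(\tilde Y)$; Osgood uniqueness forces $\tilde Y=Y$, hence $\tilde X=X$.

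The remaining and, I expect, most delicate point is measure preservation, because $U$ is only log-Lipschitz: $Y(t,\cdot)$ is merely H\"older in space, so the Liouville/Jacobian computation is unavailable, and $\div U$ need not be bounded. I would again pass to the disk. Since $\Tc_\#(dx)=\mu$ with $d\mu(y)=\det D\Tc^{-1}(y)\,dy$, it suffices to prove $Y(t,\cdot)_\#\mu=\mu$. Now $\det D\Tc^{-1}(y)\,U(y)=V(y)$ (because $\det D\Tc^{-1}=1/\det D\Tc$ and $\det D\Tc>0$ by holomorphy), and $V=K_D[\tilde\omega]$ is a genuine Biot--Savart field on $D$, so $\div V=0$ in the sense of distributions on $D$. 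Consequently the constant family $m_t\equiv\mu$ solves the continuity equation $\partial_t m+\div(Um)=0$, and so does $m_t:=Y(t,\cdot)_\#\mu$, being the pushforward of a finite measure by the $C^1$-in-time flow of the bounded continuous field $U$. As $U$ is bounded and its trajectories are unique, the superposition principle yields uniqueness of finite nonnegative measure solutions of this continuity equation, whence $Y(t,\cdot)_\#\mu=\mu$. Pulling back by $\Tc^{-1}$ gives $X(t,\cdot)_\#(dx)=dx$, i.e. $X(t,\cdot)$ preserves the Lebesgue measure on $\Omega$, which finishes the proof.
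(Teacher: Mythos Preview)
Your approach is essentially the paper's: push the problem to the disk via $\Tc$, use the boundedness and log-Lipschitz regularity of $U$ (Propositions \ref{prop:uniform-bound} and \ref{log-lip}) to build the flow $Y$ on $D$, then set $X_t=\Tc^{-1}\circ Y_t\circ\Tc$ and differentiate to recover the integral equation for $X$. Two points of execution differ. For the invariance of $D$, the paper gives a quantitative Gronwall: from $U\big(\tfrac{Y}{|Y|}\big)\cdot Y=0$ (Lemma \ref{est:K4}) and the log-Lipschitz bound it obtains $\big|\tfrac{d}{dt}|Y|\big|\le Ch(1-|Y|)$, whose integration yields $1-|Y(t)|\ge e^{1-\exp(C(t-t_*))}(1-|Y(t_*)|)^{\exp(C(t-t_*))}>0$, contradicting $|Y(t(y))|=1$; your qualitative argument (trajectory uniqueness forbids interior curves from meeting the invariant set $\partial D$) is equally valid once $U$ is extended continuously to $\overline D$ via the log-Lipschitz estimate. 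For measure preservation, the paper writes a single line --- $X(t,\cdot)$ preserves Lebesgue because $\div u=0$ --- leaving the justification implicit; your route through the continuity equation for $\mu=\Tc_\#(dx)$ on $D$ (noting $U\,d\mu=V\,dy$ with $\div V=0$) together with the superposition principle is correct and makes this step explicit, at the price of heavier machinery than the standard smoothing-of-the-divergence-free-field argument that underlies the paper's one-liner.
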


\begin{proof}
In view of Proposition  \ref{prop:uniform-bound} (implying in particular that $U\in L^{\infty}(\R_+\times D)$) and Proposition \ref{log-lip}, a classical extension of Cauchy-Lipschitz theorem (see e.g. \cite[Chapter 2]{marchioro-pulvirenti}) implies that for any $y\in D$ there exists $t(y)>0$ and a unique curve  $t\mapsto {Y}(t,y)$ in $W^{1,\infty}([0,t(y))$ such that
\begin{equation*}
 \displaystyle Y(t,y)=y+\int_0^t U(s,Y(s,y))\,ds,\quad \forall t\in [0,t(y)).
\end{equation*}
Here $t(y)>0$ is the maximal time of existence, namely $Y(t,y)\in D$ on $[0,t(y))$ and $|Y(t(y),y)|=1$ if $t(y)<\infty$. Moreover since $U\in L^\infty$ we have 
\begin{equation*}
 \displaystyle \frac{d}{dt}{Y}(t,y)=U(t,{Y}(t,y)),\quad \text{for a.e. } t\in [0,t(y)).
\end{equation*}
We next show that $t(y)=+\infty$: let assume that $t(y)<+\infty$. Recalling that $U\big(t,\frac{Y(t,y)}{|Y(t,y)|}\big)\cdot Y(t,y)=0$ (see Lemma \ref{est:K4}), we get for a.e. $t\in [0,t(y))$ sufficiently close to $t(y)$, say on $[t_*,t(y))$ (so that $Y(t,y)\neq 0$)
\begin{equation*}
\frac{\d}{\d t}|Y(t,y)|=\Big[ U(t,Y(t,y))-U\big(t,\frac{Y(t,y)}{|Y(t,y)|}\big)\Big]\cdot \frac{Y(t,y)}{|Y(t,y)|}
\end{equation*}
so that, by Proposition \ref{log-lip},
\begin{equation*}
\left|\frac{d}{dt}|Y(t,y)|\right|\leq Ch(1-|Y(t,y)|).
\end{equation*}
Integrating the Gronwall-type inequality above yields
$$1-|Y(t,y)|\geq e^{1-\exp{(C[t-t_\ast])}}(1-|Y(t_*,y)|)^{\exp(C[t-t_\ast])},\quad \forall t\in[t_{*},t(y)).$$
 Since $|Y(t_*,y)|<1$ we are led to a contradiction, and the claim that $Y(t,y)\in D, \forall t\in \R_+$, follows.

Next, we introduce the map
$$X_t=\Tc^{-1}\circ Y_t\circ \Tc$$
($f_t$ denotes the map $f(t,\cdot)$). By definition of $\Tc$, for all $x\in \Om$, the previous result implies that $ X(t,x)\in\Omega$ for all $t\in \R_+$. Moreover we compute for a.e. $t\in \R_+$
\begin{equation*}
\begin{split}
\frac{\d}{\d t}\Big(\Tc^{-1}(Y(t,\Tc(x))) \Big) = &D\Tc^{-1}(Y(t,\Tc(x))) \frac{\d}{\d t}\Big(Y(t,\Tc(x)) \Big)=D\Tc^{-1}(Y(t,\Tc(x))) U(t,Y(t,\Tc(x)))\\
&= u(t,\Tc^{-1}(Y(t,\Tc(x)))) = u(t,X(t,x)).
\end{split}
\end{equation*}
Finally, the map ${X}(t,\cdot)$ preserves the Lebesgue's measure on $\Omega$ for all $t\in \R_+$ because  $u$ is divergence free.  
\end{proof}

\medskip

We now turn to the second step.
Since $\om \in L^\infty(\R_+\times \Omega)$, Proposition \ref{prop:uniform-bound} implies that
\begin{equation*}
u \in L^\infty(\R_+ \times \Om),
\end{equation*}
and from Proposition \ref{prop:W1p-bound} that for $p\geq 1$,
\begin{equation*}
u\in L^\infty(\R_+,W^{1,p}(\Om)).  
\end{equation*}

Let us consider the stream function $\psi$ of $u$, namely the function verifying:
\[
u=\nabla^\perp \psi \text{ in } \Om, \quad \psi =0 \text{ on } \partial \Omega.
\]
By the Poincar\'e inequality, we have 
\begin{equation}\label{psi W2p}
\psi \in L^\infty(\R_+,W^{2,p}(\Om)).  
\end{equation}

As $\Om$ verifies (H), we readily check that $\Om$ verifies the {Uniform Cone Condition} (see \cite[Par. 4.8]{Adams} for the precise definition). Therefore \cite[Theo. 5.28]{Adams} states that there exists a {simple} (2,p)-extension operator $E(p)$ from $W^{2,p}(\Om)$ to $W^{2,p}(\R^2)$, namely there exists $K(p)>0$ such that for any $v \in W^{2,p}(\Om)$
\[
E(p)v=v \text{ a.e. in } \Om, \quad \| E(p)v \|_{W^{2,p}(\R^2)} \leq K(p) \| v \|_{W^{2,p}(\Om)}.
\]

For the remainder of this subsection, we fix $p>1$   and $R$ large enough such that $\Om\subset B(0,R)$.
 Then we define $\chi$ a smooth cutoff function such that $\chi \equiv 1$ on $B(0,R)$ and $\chi \equiv 0$ on $B(0,R+1)$, and we set for a.e. $t\in \R_+$
\[
\bar \psi(t,\cdot) = \chi E(p)\psi(t,\cdot),\quad \bar u(t,\cdot) = \nabla^{\perp} \bar \psi(t,\cdot).
\]
Hence we have:
$$\bar u(t,\cdot) =u(t,\cdot) \text{ a.e. in }\Omega,$$
and
\begin{equation}\label{cond:u-div}
\div \bar u(t,\cdot)=0 \text{ a.e. in }\Omega.
\end{equation}
We also infer from \eqref{psi W2p} that
\begin{equation}\label{bar u}
\bar u \in L^\infty(\R_+;W^{1,p}(\R^2)).
\end{equation}

\medskip

We set next $\bar\omega$ to be the extension of $\omega$ by zero outside $\Om$. Then:
\begin{lemma}\label{lemma:transport-full}
The extension $\bar \omega$ is a solution of the transport equation:
\[ \partial_{t} \bar \omega + \bar u \cdot \nabla \bar \omega=0,\quad \bar \omega(0)=\overline{\omega_0}\]
in the sense of distributions in $\R_+\times \R^2$.
\end{lemma}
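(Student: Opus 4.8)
The plan is to show that $\bar\omega$, the extension of $\omega$ by zero outside $\Omega$, satisfies the transport equation on all of $\R^2$ by lifting the known weak formulation on $\Omega$ and exploiting the boundary conditions. First I would recall that $\omega$ is a weak solution of the Euler vorticity equation on $\Omega$: for every test function $\phi \in C_c^\infty(\R_+\times\Omega)$ one has $\int_{\R_+}\int_\Omega \omega(\pa_t \phi + u\cdot\nabla\phi)\,dx\,dt + \int_\Omega \omega_0\,\phi(0,\cdot)\,dx = 0$, using $\div u = 0$ to write $u\cdot\nabla\omega = \div(\omega u)$. The goal is to upgrade this to test functions $\phi\in C_c^\infty(\R_+\times\R^2)$ that need not vanish near $\pa\Omega$, with $u$ replaced by $\bar u$ and $\omega$ by $\bar\omega$. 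Since $\bar\omega$ vanishes outside $\Omega$ and $\bar u = u$ a.e. on $\Omega$, the integrals over $\R^2$ collapse to integrals over $\Omega$, so the identity to prove reads $\int_{\R_+}\int_\Omega \omega(\pa_t \phi + \bar u\cdot\nabla\phi)\,dx\,dt + \int_\Omega \overline{\omega_0}\,\phi(0,\cdot)\,dx = 0$ for all such $\phi$.

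The key step is to justify integrating by parts in the term $\int_\Omega \omega\,\bar u\cdot\nabla\phi\,dx$ without generating a boundary contribution, even though $\phi$ does not vanish on $\pa\Omega$. I would argue as follows. Write $\omega\,\bar u\cdot\nabla\phi = \bar u\cdot\nabla(\omega\phi) - \phi\, \bar u\cdot\nabla\omega$ — but since $\omega$ is only bounded, I instead move the derivative onto the smooth quantities. The cleanest route is to use the flux form: with $\div\bar u = 0$ in $\Omega$ (see \eqref{cond:u-div}) and $\bar u = u$ tangent to $\pa\Omega$, the vector field $\omega\,\bar u = \omega\, u$ is divergence-free in the distributional sense inside $\Omega$, and its normal trace on $\pa\Omega$ vanishes because $u\cdot n = 0$ there. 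Concretely, for the spatial integration by parts I would approximate $\omega$ by smooth functions $\omega^\varepsilon$ (mollifying the known Eulerian weak formulation), for which $\int_\Omega \omega^\varepsilon\, u\cdot\nabla\phi\,dx = -\int_\Omega \phi\,\div(\omega^\varepsilon u)\,dx + \int_{\pa\Omega}\phi\,\omega^\varepsilon\,(u\cdot n)\,d\sigma$, and the boundary integral vanishes identically since $u\cdot n = 0$ on $\pa\Omega$ by \eqref{E2}. Passing $\varepsilon\to 0$ using $\omega\in L^\infty$ and $u\in L^\infty(\R_+;W^{1,p}(\Omega))\hookrightarrow L^\infty(\R_+\times\Omega)$ (Propositions \ref{prop:W1p-bound} and \ref{prop:uniform-bound}) recovers the identity with $\omega$.

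The main obstacle I anticipate is the regularity of $\pa\Omega$ at the corners: the trace theorem and the integration-by-parts formula on $\Omega$ require enough boundary regularity, and the corners are precisely the points where $u$ may be delicate. However, this is resolved by the observation already recorded in the paper just before Lemma \ref{lemma:transport-full}: the field $u(t,\cdot)$ is continuous up to $\overline\Omega$ and in fact vanishes at each corner, so the corners form a set of zero $\sigma$-measure on $\pa\Omega$ contributing nothing to the boundary integral, and away from them $\pa\Omega\in C^{2,\alpha}$ so the classical Gauss-Green formula applies. Finally, I would check the time derivative and initial datum: integrating by parts in $t$ produces the term $\int_\Omega \omega_0\,\phi(0,\cdot)\,dx$, which matches $\overline{\omega_0}$ since $\phi(0,\cdot)$ restricted to $\Omega$ is an admissible initial profile and $\bar\omega(0) = \overline{\omega_0}$ by construction. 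Collecting the spatial and temporal terms yields the weak transport identity on $\R_+\times\R^2$, establishing that $\bar\omega$ solves $\pa_t\bar\omega + \bar u\cdot\nabla\bar\omega = 0$ distributionally with the stated initial data.
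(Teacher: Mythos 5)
Your reduction --- extend the admissible test functions from $C_c^\infty(\R_+\times\Om)$ to $C_c^\infty(\R_+\times\R^2)$ and show that no boundary contribution appears --- is exactly the paper's, but your implementation has a genuine gap, and it sits precisely where the difficulty of the lemma lies. The Gauss--Green step is circular: for smooth $\omega^\e$ (say, mollifications of the zero extension $\bar\omega$) and using $\div u =0$, your identity reads
\[
\int_\Om \omega^\e\, u\cdot\nabla\phi \,\d x \;=\; -\int_\Om \phi\; u\cdot\nabla \omega^\e \,\d x \;+\;\int_{\pa\Om}\phi\,\omega^\e\,(u\cdot n)\,\d\sigma ,
\]
and dropping the boundary term is fine; but letting $\e\to0$ then yields only $\int_\Om \omega\, u\cdot\nabla\phi = -\lim_{\e\to0}\int_\Om\phi\,u\cdot\nabla\omega^\e$, i.e.\ the integration by parts undoes itself and produces no information (there is no uniform bound on $\nabla\omega^\e$, so the right-hand side cannot be evaluated independently). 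To extract the transport equation you would have to identify $\lim_\e \int\!\!\int \phi\, u\cdot\nabla\omega^\e$ with the time-derivative terms, i.e.\ prove that the DiPerna--Lions commutator $u\cdot\nabla\omega^\e-(u\cdot\nabla\omega)^\e$ vanishes in the limit \emph{up to the boundary}; in the interior this is classical, but near $\pa\Om$ it is not automatic and is essentially equivalent to the lemma itself. The same circularity affects your ``flux form'' remark: first, $\omega u$ is not divergence free ($\div(\omega u)=u\cdot\nabla\omega=-\pa_t\omega$); what is divergence free is the space-time field $(\omega,\omega u)$, and the lemma is exactly the statement that its \emph{weak} normal trace on $\R_+\times\pa\Om$ vanishes. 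That weak trace is defined through limits of boundary-layer integrals and is not computable from the pointwise values of $u$ on $\pa\Om$ (note that $\omega\in L^\infty$ has no boundary trace at all), so asserting that it vanishes ``because $u\cdot n=0$ there'' assumes what is to be proved.

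The quantity any proof must control is the flux through a layer of width $\e$: with $\eta_\e$ a cutoff vanishing near $\pa\Om$, the error term is $\int\!\!\int \omega\,\varphi\, u\cdot\nabla\eta_\e$, where $|\nabla\eta_\e|\sim 1/\e$ on a set of measure $O(\e)$. The pointwise identity $u\cdot n=0$ \emph{on} $\pa\Om$ gives no smallness here; one needs a quantitative, uniform-in-time rate of decay of the normal component of $u$ \emph{near} $\pa\Om$. This is the heart of the paper's proof: with $\eta_\e=\eta\bigl(\frac{1-|\Tc(x)|}{\e}\bigr)$ one has $u\cdot\nabla\eta_\e=-\frac1\e\eta'\,U(t,\Tc(x))\cdot\frac{\Tc(x)}{|\Tc(x)|}$, and then the tangency of $U$ on $\pa D$ (Lemma \ref{est:K4}) together with the log-Lipschitz estimate of Proposition \ref{log-lip} give $|u\cdot\nabla\eta_\e|\le \frac{C}{\e}|\eta'|\, h(\e)$, so this term is $O(h(\e))\to0$; the corners are removed by a second cutoff $\chi_\rho$, whose contribution is $O(\rho)$. (Your idea could be salvaged without the conformal map: Proposition \ref{prop:W1p-bound} with $p>2$ and Morrey's embedding give $u(t,\cdot)\in C^{0,\gamma}(\overline\Om)$ uniformly in $t$, and combined with $u\cdot n=0$ pointwise away from the corners this yields $|u(t,x)\cdot n(\pi(x))|\le C\, d(x,\pa\Om)^\gamma$ in the layer, where $\pi(x)$ is the nearest boundary point --- a sufficient rate. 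But that estimate, not Gauss--Green with mollified vorticity, is the missing ingredient.)
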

\begin{proof} The weak form  of the momentum equation on $u$ \eqref{E} reads:
\begin{equation*}
\text{for all } \varphi \in {\cal C}_c^\infty\left(\R_+ \times \Omega\right) \mbox{ with } \div \varphi = 0, \quad  \int_0^{\infty} \int_\Omega \left( u \cdot \pd_t \varphi +   (u \otimes u) : \nabla \varphi \right)  = -\int_\Omega u_0 \cdot \varphi(0, \cdot).
\end{equation*}
Considering $\varphi=\nabla^\perp \psi$ with $\psi \in {\cal D}\left([0, +\infty) \times \Omega\right)$ and integrating by parts in the above equation, we already know that the transport equation holds on $\R_+ \times \Om$. For any $\varepsilon,\rho>0$ we consider $\eta_{\varepsilon}$ and $\chi_{\rho}$ defined on $\R^2$ in the following way: $\chi_{\rho}$ is smooth, such that $\chi_\rho\equiv 1$ in $\cup_{k=1}^NB(x_{k},\rho)^c$ and $\chi_{\rho}\equiv 0$ in $\cup_{k=1}^N B(x_{k},\rho/2)$, and
$$\eta_\e(x)=\eta\left( \frac{1-|\Tc(x)|}{\e}\right)\quad \text{if }x\in \Omega,\quad \eta_\e(x)=0\quad \text{if }x\notin \Omega,$$where 
\begin{equation}\label{def:cut-off}
\eta: \R_+\to [0,1]\quad \text{is smooth, non-decreasing, }\quad \eta\equiv 0 \text{ on } [0,1/2],\quad
\eta \equiv 1 \text{  on } [1,+\infty).\end{equation}
 By Proposition \ref{prop T}, the functions $\eta_\e$ and  $\chi_\rho \eta_\e$ are  smooth and compactly supported in $\Omega$.

Let us fix $\varphi\in C^\infty_{c}( [0,\infty) \times \R^2)$ such that $\supp(\varphi)\subset [0,T]\times \R^2$. Then $\chi_{\rho}\eta_{\varepsilon}\varphi \in C^\infty_{c}( [0,\infty) \times \Om)$ and we have:
\begin{eqnarray*}
\int_{0}^\infty \int_{\Om} (\omega \partial_{t}(\chi_{\rho}\eta_{\varepsilon}\varphi) + u\omega \cdot \nabla (\chi_{\rho}\eta_{\varepsilon}\varphi))(t,x)\d x\d t &=& -  \int_{\Om} \omega_{0}(x) \chi_{\rho}\eta_{\varepsilon}(x)\varphi(0,x) \d x\end{eqnarray*}
therefore
\begin{eqnarray*}
\int_{0}^\infty \int_{\R^2} \chi_{\rho}\eta_{\varepsilon} (\bar\omega \partial_{t}\varphi + \bar u\bar \omega \cdot \nabla \varphi))(t,x)\d x\d t &=& -  \int_{\R^2} \chi_{\rho}\eta_{\varepsilon}(x)  \bar\omega_{0}(x)\varphi(0,x) \d x \\
&&- \int_{0}^\infty \int_{\R^2} \chi_{\rho}((\bar u\bar \omega \varphi) \cdot \nabla \eta_{\varepsilon})(t,x)\d x\d t \\
&&- \int_{0}^\infty \int_{\R^2}  \eta_{\varepsilon}((\bar u\bar \omega \varphi) \cdot \nabla\chi_{\rho})(t,x)\d x\d t.
\end{eqnarray*}
We fix $\rho>0$ and let $\varepsilon$ tend to $0$. We first deduce easily from the uniform estimates on $u$ and $\omega$ in $\R_+\times \Om$ that
\begin{equation}\label{lim:1}
\begin{split}
\int_{0}^\infty \int_{\R^2} \chi_{\rho}\eta_{\varepsilon} (\bar\omega \partial_{t}\varphi + \bar u\bar \omega \cdot \nabla \varphi)(t,x)\d x\d t
\longrightarrow&  \int_{0}^\infty \int_{\R^2} \chi_\rho (\bar\omega \partial_{t}\varphi + \bar u\bar \omega \cdot \nabla \varphi)(t,x)\d x\d t
\end{split}
\end{equation}
and
\begin{equation}\label{lim:2}
\int_{\R^2}\chi_{\rho}(x) \eta_{\varepsilon}(x)  \bar\omega_{0}(x)\varphi(0,x) \d x \to \int_{\R^2}  \chi_\rho(x) \bar\omega_{0}(x)\varphi(0,x) \d x.
\end{equation}

Next, by using uniform bounds on $\omega$ and $u$ (see Proposition \ref{prop:uniform-bound}), we have:
\begin{equation}\label{lim:3}
\begin{split}
\limsup_{\varepsilon \to 0}\Bigl| \int_{0}^\infty \int_{\R^2}  \eta_{\varepsilon}(\bar u \bar \omega \varphi) \cdot \nabla\chi_{\rho}(t,x)\d x\d t \Bigl| 
&\leq C T \|\varphi\|_{L^\infty} \|u\|_{L^\infty}\|\omega\|_{L^\infty}\|\nabla \chi_\rho\|_{L^1(\R^2)}\\
&\leq C T \|\varphi\|_{L^\infty} \|u\|_{L^\infty}\|\omega\|_{L^\infty}N\rho.
\end{split}
\end{equation}

We then claim that:
\begin{equation}\label{lim:4}
\lim_{\e \to 0}\int_{0}^\infty \int_{\R^2} \chi_{\rho}((\bar u\bar \omega \varphi) \cdot \nabla \eta_{\varepsilon})(t,x)\d x\d t=0.
\end{equation}

Indeed, we have for $x\in \Omega\setminus \cup_{k=1}^N B(x_k, \rho/2)$
$$\nabla \eta_\e(x)=-\frac{1}{\e}\eta'\left(\frac{1-|\Tc(x)|}{\e}\right) D\Tc^T(x) \frac{\Tc(x)}{|\Tc(x)|}$$
therefore 
$$u(t,x)\cdot \nabla \eta_\e(x)=-\frac{1}{\e}\eta'\left(\frac{1-|\Tc(x)|}{\e}\right) U(t,\Tc(x))\cdot  \frac{\Tc(x)}{|\Tc(x)|},$$
where $U$ is defined in Proposition \ref{log-lip}.

Setting $y=\Tc(x)$, we have
\begin{equation*}
\begin{split}
u(t,x)\cdot \nabla \eta_\e(x)=-\frac{1}{\e}
\eta'\left(\frac{1-|y|}{\e}\right) \Big[U(t,y)-U\left(t,\frac{y}{|y|}\right)\Big]\cdot  \frac{y}{|y|}-
\frac{1}{\e}
\eta'\left(\frac{1-|y|}{\e}\right)U\left(t,\frac{y}{|y|}\right)\cdot  \frac{y}{|y|}.\end{split}\end{equation*}
On the one hand, we have by virtue of Lemma \ref{est:K4}
\begin{equation*}
U\left(t,\frac{y}{|y|}\right)\cdot  \frac{y}{|y|}=0.
\end{equation*}
On the other hand, by Proposition \ref{log-lip} we have
$$\left|U(t,y)-U\left(t,\frac{y}{|y|}\right)\right|\leq Ch\left(1-|y|\right).$$
We therefore obtain
\begin{equation*}
\begin{split}
|u(t,x)\cdot \nabla \eta_\e(x)|\leq \frac{C}{\e}
\left|\eta'\left(\frac{1-|y|}{\e}\right)\right| h(\e).\end{split}\end{equation*}

Finally, changing variables yields
\begin{equation*}
\begin{split}
\int_{0}^\infty &\int_{\R^2} |\chi_{\rho}\bar \omega \varphi \bar u\cdot \nabla \eta_{\varepsilon})|
\d x\d t\\
&\leq C T h(\e)\|\varphi\|_{L^\infty}\| \omega\|_{L^\infty}\int_D \text{det }(D\Tc^{-1}(y) )\chi_\rho\left(\Tc^{-1}(y)\right)\frac{1}{\e}
\left|\eta'\left(\frac{1-|y|}{\e}\right)\right|\d y\\
&\leq C(\rho) T h(\e)\|\varphi\|_{L^\infty}\| \omega\|_{L^\infty}\int_{0}^1 \frac{r}{\e}
\left|\eta'\left(\frac{1-r}{\e}\right)\right|\d r\\
&\leq C(\rho)T h(\e)\|\varphi\|_{L^\infty}\| \omega\|_{L^\infty}\int_{0}^{1/\e} (1-\e \tau)
\left|\eta'\left(\tau\right)\right|\d \tau\leq C(\rho) T h(\e)\|\varphi\|_{L^\infty}\| \omega\|_{L^\infty}\|\eta'\|_{L^1(\R_+)},
\end{split}
\end{equation*}
where we have used the fact that $D\Tc^{-1}$ is smooth away from the corners together with the support properties of $\chi_\rho$. The claim \eqref{lim:4} follows.

\medskip

To conclude the proof, we let eventually $\rho$ tend to $0$ in \eqref{lim:1}, \eqref{lim:2} and \eqref{lim:3}.

\end{proof}

\medskip

\textbf{Proof of Proposition \ref{prop:lagrangian-formulation}}

In view of the assumptions \eqref{cond:u-div}-\eqref{bar u}, the results of DiPerna and Lions \cite{dip-lions} on linear transport equations ensure that $\bar \omega$ is the unique solution in $L^\infty(\R_+, L^{p'}(\R^2))$ to the linear transport equation with field $\bar u$ (where $p'$ denotes the conjugate exponent of $p$). For a precise statement, we refer to \cite[Theo. II.2]{dip-lions}. We also refer to, e.g., \cite{ambrosio} (Section 4) for more recent developments in the theory. On the other hand, for all $t\in \R_+$ we have $X(t,\Omega)\subset \Omega$; hence recalling that $X(t,\cdot)$ preserves Lebesgue's measure we observe that by Fubini theorem, for a.e. $(t,x)\in \R_+\times  \Omega$,  $u(s,X(s,x))=\overline{u}(s,X(s,x))$. Hence given the definition of $X$ one can readily prove that the map $\tilde{\omega}(t):=X(t,\cdot)_\#\bar \omega_0$ is a solution in $L^\infty(\R_+,L^{p'}(\R^2))$ to the same linear transport equation with field $\bar u$ (see e.g. the proof of Proposition 2.1 in \cite{ambrosio}). By uniqueness, we conclude that $\bar \omega(t)=\tilde{\omega}(t)=X(t,\cdot)_\#\bar \omega_0$ for a.e. $t\in \R_+$, as we wanted.

\medskip

\subsection{Uniqueness by a Lagrangian approach}

We consider two solutions $(u_1,\omega_1)$ and $(u_2,\omega_2)$ of \eqref{E}-\eqref{E2} with the same initial datum and denote by $X_k$, $k=1,2$, the corresponding flows given by Proposition \ref{prop:lagrangian-formulation}. 

\medskip

As
\begin{equation*}
\begin{split}
|\mathcal{T}(X_1(s,x))-\mathcal{T}(X_2(s,x)) | 
&\leq  \|D\Tc \|_{L^\infty}  \int_0^s  |u_1(\tau,X_1(\tau,x))-u_2(\tau,X_2(\tau,x)) | \d \tau \\
&\leq s\|D\Tc \|_{L^\infty} (\|u_1\|_{L^\infty}+\|u_2\|_{L^\infty}),
\end{split}
\end{equation*}
we infer from \eqref{est:DT2} and Proposition \ref{prop:uniform-bound} that there exists $t_0$ depending only on $\Omega$ and $\|\omega\|_{L^\infty(\Omega)}$ such that 
 \begin{equation}\label{petitesse}
\sup_{s\in [0,t_0]}\sup_{x\in \Omega}|\mathcal{T}(X_1(s,x))-\mathcal{T}(X_2(s,x)) |<\min(|\Omega|^{-1},1).
\end{equation}

We define next for $t\in \R_+$
\begin{equation*}
f(t)=\int_0^t\int_{\Om}|\mathcal{T}(X_1(s,x))-\mathcal{T}(X_2(s,x))| \d x\d s.
\end{equation*}
We will show that $f$ vanishes identically on $[0,t_0]$.

Given the time regularity of $\mathcal{T}\circ X_i$, we can compute for all $t\in \R_+$
\begin{eqnarray*}
f'(t) &=&\int_{\Om}|\mathcal{T}(X_1(t,x))-\mathcal{T}(X_2(t,x))| \d x\\
&\leq & \int_{\Om} \int_0^t \Bigl| D\Tc(X_1(s,x)) \frac{\d}{\d t} X_1(s,x)-D\Tc(X_2(s,x)) \frac{\d}{\d s} X_2(s,x)\Bigl|\d s\d x\\
&\leq&\int_{\Om}\int_0^t |U_1(s,\Tc (X_1(s,x))-U_2(s,\Tc (X_2(s,x))| \d s \d x\\
&\leq& \int_0^t  \int_{\Om}|U_1(s,\Tc (X_1(s,x))-U_1(s,\Tc (X_2(s,x))|  \d x \d s\\
&&\quad+\int_0^t \int_{\Om} |U_1(s,\Tc (X_2(s,x))-U_2(s,\Tc (X_2(s,x))| \d x \d s\\
&=:& F_1+F_2.
\end{eqnarray*}
By  Proposition \ref{log-lip}, we have 
\[
F_1\leq C \int_0^t \int_\Om h(|\mathcal{T}(X_1(s,x))-\mathcal{T}(X_2(s,x))|) \d x\d s,
\]
where we recall $h(r)=r(1+|\ln r|)$.

Next, since $X_2(s,\cdot)$ preserves the Lebesgue's measure on $\Omega$, we have
\begin{equation*}
F_2=\int_0^t \int_{\Om}|U_1(s,\Tc (x))-U_2(s,\Tc (x))| \d x\d s=\int_0^t \int_{\Om}\left|D\Tc(x)(
u_1(s,x)-u_2(s,x))\right| \d x\d s.
\end{equation*}
Let $s\in \R_+$ such that $\omega_i(s)=X_i(s)_\#\omega_0$. Let $x\in \Omega$ and $\varepsilon>0$ such that $\min_k |T(x)-T(x_k)|>\varepsilon.$ Recalling that $\eta$ is the cut-off function defined in \eqref{def:cut-off}, we have by continuity of $\tilde{x}\mapsto K_D(\Tc(x),\Tc(\tilde{x}))\eta\left(\frac{|\Tc(x)-\Tc(\tilde{x})|}{\varepsilon}\right)$
\begin{equation*}\begin{split}
 \int_{\Om}K_D(\Tc(x),\Tc(\tilde{x}))&\eta\left(\frac{|\Tc(x)-\Tc(\tilde{x})|}{\varepsilon}\right)\omega_i(s,\tilde{x})\d \tilde{x}\\&= \int_{\Om}K_D(\Tc(x),\Tc(X_i(s,\bar{x})))\eta\left(\frac{|\Tc(x)-\Tc(X_i(s,\bar{x}))|}{\varepsilon}\right)\omega_0(\bar{x})\d \bar{x}.
\end{split}\end{equation*}
Letting $\varepsilon$ go to $0$ we can use again the previous estimates on $K_D$ and $D\Tc$ so that all terms above pass to the limit,  and the Biot-Savart law yields
\[
u_i(s,x)=D\Tc(x)^{T} \int_{\Om}K_D(\Tc(x),\Tc(\tilde{x}))\omega_i(s,\tilde{x})\d \tilde{x}=
D\Tc(x)^{T} \int_{\Om}K_D(\Tc(x),\Tc(X_i(s,\bar{x}))\omega_0(\bar{x})\d \bar{x}.
\]
Thus bringing this latter into $F_2$ and changing variables $y=\Tc(x)$, we have
\begin{eqnarray*}
F_2&\leq&  \int_0^t \int_\Om\det (D\mathcal{T}(x))\left(\int_\Om \Big|K_D(\mathcal{T}(x),\mathcal{T}(X_1(s,\bar x)))-K_D(\mathcal{T}(x),\mathcal{T}(X_2(s,\bar x)))\Big| |\omega_0(\bar{x}) | \d \bar x\right) \d x\, \d s\\
&=&\int_0^t \int_D\left(\int_\Om \Big|K_D(y,\mathcal{T}(X_1(s,\bar x)))-K_D(y,\mathcal{T}(X_2(s,\bar x)))\Big|  |\omega_0(\bar{x})| \d \bar x\right) \d y\d s\\
&\leq&  \| \omega_0\|_{L^\infty(\Omega)}\int_0^t \int_\Om\left(\int_{D}  |K_D(y,\mathcal{T}(X_1(s,\bar x)))-K_D(y,\mathcal{T}(X_2(s,\bar x)))| \d y\right) \d \bar x\d s\\
&\leq& C \int_0^t \int_\Om h(|\mathcal{T}(X_1(s,\bar x))-\mathcal{T}(X_2(s, \bar x))|)\d \bar x\d s,
\end{eqnarray*}
where we have applied Lemma \ref{est:K3} in the last inequality.\label{K3bis}

For $t\in [0,t_0]$ we finally apply Jensen's inequality in the estimates for $F_1$ and $F_2$ using that $h$ is concave on $[0,1]$ and \eqref{petitesse} to  obtain
\[
f'(t)\leq C  h(f(t)), \quad \forall t\in [0,t_0].
\]
Therefore $f(t)\leq f(0)^{\exp(-Ct)} e^{1-\exp(-Ct)}=0$ on $[0,t_0]$. Hence for a.e. $x\in \Omega$ we have $X_1(t,x)=X_2(t,x)$ on $[0,t_0]$. Repeating the argument on the intervals $[kt_0,(k+1)t_0]$, $k\in \mathbb{N}$ we conclude that $\omega_1=\omega_2$ a.e.  on $\R_+$ and Theorem \ref{thm:main} is proved.

\section{Proof of Proposition \ref{log-lip}} \label{sect 5}

In this last section, we write all the details to establish the log-Lipschitz regularity of the vector field $U$.
By Proposition \ref{prop:uniform-bound} and \eqref{est:DT1}, we know that $U$ is uniformly bounded on $D$. Therefore it suffices to establish the inequality \eqref{ineq:loglip} when $$d:=|y_1-y_2|\leq \frac{\delta}{2}<1$$
(we recall that $\delta>0$ is defined in Proposition \ref{prop T}).
For two subsets $\Sigma_1 \subset \Sigma_2\subset D$ we will split $U(y_1)-U(y_2)$ using Remark \ref{DTDTt} and \eqref{eq:BS-2}:
\begin{equation}\label{eq:decomposition-1}
\begin{split}
U(y_1)-U(y_2)=&\det \big(D\mathcal{T}(\Tc^{-1}(y_{1}))\big)\int_{D} K_D(y_1,z)\omega(\mathcal{T}^{-1}(z))\det (D\mathcal{T}^{-1}(z))\d z \\
&- \det \big(D\mathcal{T}(\Tc^{-1}(y_{2}))\big)\int_{D} K_D(y_2,z)\omega(\mathcal{T}^{-1}(z))\det (D\mathcal{T}^{-1}(z))\d z\\
=:&[U_1-U_2]+[V_1+V_2]+[W_1+W_2],
\end{split}
\end{equation}
where
\begin{equation}\label{eq:decomposition-2}
\begin{split}
U_1&=\det \big(D\mathcal{T}(\Tc^{-1}(y_1))\big)\int_{\Sigma_1} K_D(y_1,z)\omega(\mathcal{T}^{-1}(z))\det (D\mathcal{T}^{-1}(z))\d z\\
U_2&=\det \big(D\mathcal{T}(\Tc^{-1}(y_2))\big)\int_{\Sigma_1} K_D(y_2,z)\omega(\mathcal{T}^{-1}(z))\det (D\mathcal{T}^{-1}(z))\d z,
\end{split}
\end{equation}
\begin{equation}\label{eq:decomposition-3}
\begin{split}
V_1=&\Bigl(\det \big(D\mathcal{T}(\Tc^{-1}(y_1))\big)-\det \big(D\mathcal{T}(\Tc^{-1}(y_2))\big)\Bigl)\int_{D\setminus \Sigma_2} K_D(y_1,z)\omega(\mathcal{T}^{-1}(z))\det (D\mathcal{T}^{-1}(z))\d z\\
V_2=&\det \big(D\mathcal{T}(\Tc^{-1}(y_2))\big)\int_{D\setminus \Sigma_2} \Bigl(K_D(y_1,z)-K_D(y_2,z)\Bigl)\omega(\mathcal{T}^{-1}
(z))\det (D\mathcal{T}^{-1}(z))\d z\\
\end{split}
\end{equation}
and
\begin{equation}\label{eq:decomposition-4}
\begin{split}
W_1=&\Bigl(\det \big(D\mathcal{T}(\Tc^{-1}(y_1))\big)-\det \big(D\mathcal{T}(\Tc^{-1}(y_2))\big)\Bigl)\int_{\Sigma_2\setminus \Sigma_1} K_D(y_1,z)\omega(\mathcal{T}^{-1}(z))\det (D\mathcal{T}^{-1}(z))\d z\\
W_2=&\det \big(D\mathcal{T}(\Tc^{-1}(y_2))\big)\int_{\Sigma_2\setminus \Sigma_1} \Bigl(K_D(y_1,z)-K_D(y_2,z)\Bigl)\omega(\mathcal{T}^{-1}(z))\det (D\mathcal{T}^{-1}(z))\d z.
\end{split}
\end{equation}

\medskip

\noindent \textbf{First step:} $y_1\in D\setminus (\cup_{k=1}^N B(\Tc(x_{k}),\delta))$. It follows that $y_2\in D\setminus (\cup_{k=1}^N B(\Tc(x_{k}),\delta/2))$.

We set $\Sigma_1=\Sigma_2=\emptyset$ in \eqref{eq:decomposition-1}, so that $U_1=U_2=0=W_1=W_2$. Next, since  $\text{det}(D\mathcal{T}\circ\mathcal{T}^{-1})$ is smooth and its derivative is bounded away from the points $\Tc(x_{k})$ we have
$$|V_1|\leq C|y_1-y_2|\int_{D} \left|
K_D(y_1,z)\right||\omega(\mathcal{T}^{-1}(z))|\det (D\mathcal{T}^{-1}(z))\d z\leq Cd\|\omega\|_{ L^\infty(\Om)},$$
where we have used \eqref{estimate:K1} to estimate the integral.

For the last term, we have by \eqref{est:DT2}
\begin{equation*}
\begin{split}
|V_2|&\leq C\sum_{k=1}^N \int_{D\cap B(\Tc(x_{k}),\delta/4)}\left|K_D(y_1,z)-K_D(y_2,z)\right||\omega(\mathcal{T}^{-1}(z))|\det (D\mathcal{T}^{-1}(z))\d z\\
&+C\int_{D\setminus (\cup_{k=1}^N B(\Tc(x_{k}),\delta/4))}\left|K_D(y_1,z)-K_D(y_2,z)\right||\omega(\mathcal{T}^{-1}(z))|\det (D\mathcal{T}^{-1}(z))\d z\\
&\leq I_1+I_2.
\end{split}
\end{equation*}
On the one hand, for $z\in B(\Tc(x_{k}),\delta/4)$ we have $|y_1-z|\geq 3\delta/4$ and $|y_2-z|\geq \delta/4$, hence \eqref{estimate:K2} yields
\begin{equation*}
I_1 \leq \frac{16C}{3\delta^2}
|y_1-y_2|\int_{D}|\omega(\mathcal{T}^{-1}(z))|\det (D\mathcal{T}^{-1}(z))\d z \leq Cd \|\omega\|_{L^1(\Om)}.
\end{equation*}
On the other hand,  recall that $D\mathcal{T}^{-1}$ is bounded away from $\Tc(x_{k})$, so that applying Lemma \ref{est:K3} we find
\begin{equation*}
I_2\leq C\|\omega\|_{L^\infty(\Omega)}
\int_{D}\left|K_D(y_1,z)-K_D(y_2,z)\right|\d z\leq C h(|y_{1}-y_{2}|)\|\omega\|_{L^\infty(\Omega)}.
\end{equation*}

So combining the previous estimates we obtain
\begin{equation*}
|U(y_1)-U(y_2)|\leq C \|\omega\|_{L^\infty(\Omega)} h(|y_1-y_2|),\quad \forall y_1\in D\setminus (\cup_{k=1}^N B(\Tc(x_{k}),\delta)).
\end{equation*}

By symmetry , we also have
\begin{equation*}
|U(y_1)-U(y_2)|\leq C \|\omega\|_{L^\infty(\Omega)} h(|y_1-y_2|),\quad \forall y_2\in D\setminus (\cup_{k=1}^N B(\Tc(x_{k}),\delta)).
\end{equation*}

\medskip

\noindent \textbf{Second step:} $y_1,y_{2}\in B(\Tc(x_{k}),\delta)$.

We note that $g:=\det (D\mathcal{T}\circ\Tc^{-1}))$ satisfies  $g(y)=\mathcal{O}(|y-\Tc(x_{k})|^{2\alpha_{k}})$ in the neighborhood of the points $\Tc(x_{k})$. By Proposition \ref{prop T} and  \eqref{est:DT1}, \eqref{est:DT1-bis}, \eqref{est:DT2} we estimate
\begin{equation*}
\begin{split}
|\nabla g(y)| &\leq 4 |D\mathcal{T}(\Tc^{-1}(y))| |D^2\mathcal{T}(\Tc^{-1}(y))| |D\Tc^{-1}(y)| 
\leq C |y - \Tc(x_{k})|^{2\alpha_{k}-1},
\end{split}
\end{equation*}
hence, we infer by the mean value theorem that
\begin{equation}\label{MVT}
| \det \big(D\mathcal{T}(\Tc^{-1}(y_{1}))\big) -  \det \big(D\mathcal{T}(\Tc^{-1}(y_{2}))\big) | \leq Cd \sup_{y\in [y_{1},y_{2}]} |y-\Tc(x_{k})|^{2\alpha_{k}-1}
\end{equation}
with $\alpha_{k}\geq 1/2$. We set $$\Sigma_2=D\cap B(\Tc(x_{k}),\delta).$$
By \eqref{eq:decomposition-3} we have, using \eqref{MVT}, \eqref{estimate:K1}, the fact that $D\mathcal{T}^{-1}$ is bounded away from the $x_{j}$ and that $|y_{1}-z|\geq \delta$ when $z\in B(\Tc(x_{j}),\delta)$ for $j\neq k$:
\begin{equation*}
\begin{split}
|V_1|\leq Cd \|\omega\|_{L^\infty(\Omega)}\Bigl(& \int_{D\setminus (\cup_{j=1}^N B(\Tc(x_{j}),\delta)) } |K_D(y_1,z)| \det (D\mathcal{T}^{-1}(z)) \d z \\
 &+ \sum_{j\neq k}\int_{ B(\Tc(x_{j}),\delta)\cap D} |K_D(y_1,z)|\det (D\mathcal{T}^{-1}(z)) \d z \Bigl)\\
 \leq Cd \|\omega\|_{L^\infty(\Omega)}\Bigl(& C \int_{D} |y_{1} -z|^{-1}  \d z + \frac C{\delta} \sum_{j\neq k}\int_{ D} \det (D\mathcal{T}^{-1}(z)) \d z \Bigl)\\
  \leq  Cd\|\omega\|_{L^\infty(\Omega)}.&
\end{split}
\end{equation*}
Similarly, Lemma \ref{est:K3} and  \eqref{estimate:K2} yield
\begin{equation*}
\begin{split}
|V_2| \leq C \|\omega\|_{L^\infty(\Omega)}\Bigl(& \int_{D\setminus (\cup_{j=1}^N B(\Tc(x_{j}),\delta)) }\left|K_D(y_1,z)-K_D(y_2,z)\right| \det (D\mathcal{T}^{-1}(z)) \d z \\
 &+ \sum_{j\neq k}\int_{ B(\Tc(x_{j}),\delta)\cap D} \left|K_D(y_1,z)-K_D(y_2,z)\right| \det (D\mathcal{T}^{-1}(z)) \d z \Bigl)\\
 \leq C \|\omega\|_{L^\infty(\Omega)}\Bigl(& C \int_{D} \left|K_D(y_1,z)-K_D(y_2,z)\right|  \d z + \frac{ Cd}{\delta^2} \sum_{j\neq k}\int_{ D} |\det (D\mathcal{T}^{-1}(z))| \d z \Bigl)\\
\leq C\|\omega\|_{L^\infty(\Omega)}(&  h(d) +  d).
\end{split}
\end{equation*}

It remains to estimate the parts $U_1-U_2$ and $W_1+W_2$ for a judicious choice of $\Sigma_1\subset D\cap B(\Tc(x_{k}),\delta)$.

As in the proof of Lemma \ref{est:K3}, we introduce
$$\tilde y:=\f{y_{1}+y_{2}}{2}$$
and we consider the following cases.

\medskip

{\bf Case 1:} $\Tc(x_{k})\in B(\tilde y, 5d)$.

We set $$\Sigma_1=D\cap B(\Tc(x_{k}),\delta)\cap B(\tilde y,6d)=\Sigma_2\cap B(\tilde y,6d).$$
Using the estimates of Proposition \ref{prop T} and \eqref{est:DT1} in the neighborhood  $\Tc(x_{k})$ we get for $i=1,2$
\begin{equation*}
\begin{split}
|U_i|
&\leq C\|\omega\|_{L^\infty(\Omega)} |\Tc(x_{k})-y_i|^{2\alpha_k}\int_{\Sigma_1} |K_D(y_i,z)||z-\Tc(x_{k})|^{-2\alpha_k}\d z.
\end{split}
\end{equation*}
Next, observing that
\[
|\Tc(x_{k})-y_i|^{2\alpha_k} \leq 2^{2\alpha_k} (\max\{|y_i-z|,  |z-\Tc(x_{k})|\} )^{2\alpha_k} 
\]
we compute using \eqref{estimate:K1}
\begin{equation*}
\begin{split}
 |\Tc(x_{k})&-y_i|^{2\alpha_k}\int_{\Sigma_1} |K_D(y_i-z)||z-\Tc(x_{k})|^{-2\alpha_k}\d z\\
\leq& C |\Tc(x_{k})-y_i|^{2\alpha_k}\int_{\Sigma_1\cap \{|y_i-z|\leq |z-\Tc(x_{k})|\}} |y_i-z|^{-1}|z-\Tc(x_{k})|^{-2\alpha_k}\d z\\
&+C|\Tc(x_{k})-y_i|^{2\alpha_k}\int_{\Sigma_1\cap \{|y_i-z|\geq |z-\Tc(x_{k})|\}} |y_i-z|^{-1}|z-\Tc(x_{k})|^{-2\alpha_k}\d z\\
\leq& C \int_{\Sigma_1\cap \{|y_i-z|\leq |z-\Tc(x_{k})|\}} |y_i-z|^{-1}\d z
   +\int_{\Sigma_1\cap \{|y_i-z|\geq |z-\Tc(x_{k})|\}} |y_i-z|^{2\alpha_k-1}|z-\Tc(x_{k})|^{-2\alpha_k}\d z\\
   \leq& C \int_{ \{|y_i-z|\leq 7d\}} |y_i-z|^{-1}\d z
   +(7d)^{2\alpha_k-1} \int_{ \{|z-\Tc(x_{k})|\leq 7d\}} |z-\Tc(x_{k})|^{-2\alpha_k}\d z\\
 \leq &Cd.
\end{split}
\end{equation*}

This yields
\begin{equation*}
|U_1|+|U_2|\leq Cd\|\omega\|_{L^\infty(\Omega)}.
\end{equation*}

We next estimate $W_1$ and $W_2$. As $2\alpha_{k}-1\geq 0$, we have by \eqref{MVT}
\begin{equation}
\label{ineq:dt}
\Bigl|\det \big(D\mathcal{T}(\Tc^{-1}(y_1))\big)-\det \big(D\mathcal{T}(\Tc^{-1}(y_2))\big)\Bigl|\leq Cd
\big( |\Tc(x_{k})-y_1|^{2\alpha_k-1}+ |\Tc(x_{k})-y_2|^{2\alpha_k-1}\big).
\end{equation}
Hence
\begin{equation*}
\begin{split}
|W_1|&\leq C \|\omega\|_{L^\infty(\Omega)} d \big( |\Tc(x_{k})-y_1|^{2\alpha_k-1}+ |\Tc(x_{k})-y_2|^{2\alpha_k-1}\big)
 \int_{\Sigma_{2}\setminus\Sigma_1} |K_D(y_1,z)||z-\Tc(x_{k})|^{-2\alpha_k}\d z\\
&\leq C \|\omega\|_{L^\infty(\Omega)} d \big( |\Tc(x_{k})-y_1|^{2\alpha_k-1}+ |\Tc(x_{k})-y_2|^{2\alpha_k-1}\big)
 \int_{D\setminus\Sigma_1} |y_1-z|^{-1}|z-\Tc(x_{k})|^{-2\alpha_k}\d z.
\end{split}
\end{equation*}
Now, for $i=1,2$ and $z\in D\setminus \Sigma_1$ we have
$|\Tc(x_{k})-y_i|\leq |\Tc(x_{k})-\tilde y|+|\tilde y-y_i|\leq 6 d\leq |z-\tilde y|$. On the other hand, as $|\tilde y-\Tc(x_{k})|\leq 5d\leq 5|z-\tilde y|/6$ we get
$|z-\Tc(x_{k})|\geq |z-\tilde y|-|\Tc(x_{k})-\tilde y|\geq |z-\tilde y|/6$ and $|y_i-z|\geq |\tilde y-z|-|y_{i}-\tilde y|\geq 5|z-\tilde y|/6$. Therefore
\begin{equation*}
\begin{split}
|W_1|&\leq C  \|\omega\|_{L^\infty(\Omega)} d
 \int_{D\setminus B(\tilde y,6d)} |z-\tilde y|^{-2}\d z\leq C \|\omega\|_{L^\infty(\Omega)} d(1+ |\ln d|).
\end{split}
\end{equation*}

On the other hand, using again the estimates in the neighborhood of $\Tc(x_{k})$ and the estimate
\eqref{estimate:K2} we obtain
\begin{equation*}
\begin{split}
|W_2|&\leq C \|\omega\|_{L^\infty(\Omega)} d \,|\Tc(x_{k})-y_2|^{2\alpha_k}\int_{D \setminus\Sigma_1} |z-y_1|^{-1}|z-y_2|^{-1}|z-\Tc(x_{k})|^{-2\alpha_k}\d z.
\end{split}
\end{equation*}
Then, using the same inequalities as above for $z\in D\setminus \Sigma_1$, we get
\begin{equation*}
\begin{split}
|W_2| \leq C  \|\omega\|_{L^\infty(\Omega)}d
 \int_{D\setminus B(\tilde y,6d)} |z-\tilde y|^{-2}\d z\leq C \|\omega\|_{L^\infty(\Omega)} d(1+ |\ln d|).
\end{split}
\end{equation*}

\medskip

{\bf Case 2:} $\Tc(x_{k})\notin B(\tilde y,5d)$.

In particular  $ |y_i-\Tc(x_{k})|\geq |\Tc(x_{k})-\tilde y|-|\tilde y-y_i|\geq 4d$ for $i=1,2$, so that
\begin{equation}\label{equi:y}\frac{|y_2-\Tc(x_{k})|}{2}\leq |y_1-\Tc(x_{k})|\leq 2|y_2-\Tc(x_{k})|.\end{equation}
We set
$$\Sigma_1=D\cap B(\Tc(x_{k}),\delta)\cap B(\tilde y,2d)=\Sigma_2\cap B(\tilde y,2d) .$$
For $z\in \Sigma_1$ we have $|z-\Tc(x_{k})|\geq |\tilde y-\Tc(x_{k})|-|z-\tilde y|\geq 3d$, therefore
\begin{equation*}
\begin{split}
|U_i|
\leq &C\|\omega\|_{L^\infty(\Omega)} |\Tc(x_{k})-y_i|^{2\alpha_k}\int_{\Sigma_1}
|y_i-z|^{-1}|z-\Tc(x_{k})|^{-2\alpha_k}\d z\\
\leq &C\|\omega\|_{L^\infty(\Omega)} \Bigl(\int_{\Sigma_1\cap \{|y_i-z|\leq |z-\Tc(x_{k})|\}} (2|\Tc(x_{k})-z|)^{2\alpha_k}
|y_i-z|^{-1}|z-\Tc(x_{k})|^{-2\alpha_k}\d z    \\
&+\int_{\Sigma_1\cap \{|y_i-z|\geq |z-\Tc(x_{k})|\}}  (2|z-y_i|)^{2\alpha_k}
|y_i-z|^{-1}|z-\Tc(x_{k})|^{-2\alpha_k}\d z   \Bigl)        \\
\leq& C \|\omega\|_{L^\infty(\Omega)} (2^{2\alpha_k} + (6d)^{2\alpha_k}(3d)^{-2\alpha_k} ) \int_{B(y_i,3d)}|y_i-z|^{-1}\d z=C\|\omega\|_{L^\infty(\Omega)} d.
\end{split}
\end{equation*}

To estimate $W_1+W_2$ we further decompose $\Sigma_2\setminus \Sigma_1$ in two sets, setting
$$\Sigma_2\setminus \Sigma_1=
\Big[\Sigma_2\cap B(\Tc(x_{k}),2d)\Big]\bigcup \Big[\Sigma_2\setminus [B(\Tc(x_{k}),2d)\cup B(\tilde y,2d)]\Big].$$
By \eqref{ineq:dt} and \eqref{equi:y} we have
\begin{equation*}
\begin{split}
|W_1+W_2|\leq& C \|\omega\|_{L^\infty(\Omega)} d  |\Tc(x_{k})-y_1|^{2\alpha_k-1}
 \int_{B(\Tc(x_{k}),2d)} |y_1-z|^{-1}|z-\Tc(x_{k})|^{-2\alpha_k}\d z\\
&+C \|\omega\|_{L^\infty(\Omega)}d \,|\Tc(x_{k})-y_1|^{2\alpha_k}\int_{B(\Tc(x_{k}),2d)} |z-y_1|^{-1}|z-y_2|^{-1}|z-\Tc(x_{k})|^{-2\alpha_k}\d z\\
&+C \|\omega\|_{L^\infty(\Omega)} d |\Tc(x_{k})-y_1|^{2\alpha_k-1}
 \int_{D\setminus [B(\Tc(x_{k}),2d)\cup B(\tilde y,2d)]} |y_1-z|^{-1}|z-\Tc(x_{k})|^{-2\alpha_k}\d z\\
&+C \|\omega\|_{L^\infty(\Omega)} d \,|\Tc(x_{k})-y_1|^{2\alpha_k}\int_{D\setminus [B(\Tc(x_{k}),2d)\cup B(\tilde y,2d)]} |z-y_1|^{-1}|z-y_2|^{-1}|z-\Tc(x_{k})|^{-2\alpha_k}\d z.
\end{split}
\end{equation*}

We first estimate the contributions of the integrals on the domain $ B(\Tc(x_{k}),2d)$.
For $z\in B(\Tc(x_{k}),2d)$ we have $|y_i-z|\geq |y_{i}-\Tc(x_{k})|-|z-\Tc(x_{k})| \geq  2d\geq  |z-\Tc(x_{k})|$. Therefore $|y_1-\Tc(x_{k})|\leq |z-\Tc(x_{k})|+|y_1-z|\leq 2|y_1-z|$.
It follows that
\begin{equation*}
\begin{split}
 |\Tc(x_{k})-y_1|^{2\alpha_k-1}
 \int_{B(\Tc(x_{k}),2d)} |y_1-z|^{-1}&|z-\Tc(x_{k})|^{-2\alpha_k}\d z\\
 &\leq C \int_{B(\Tc(x_{k}),2d)} |y_1-z|^{2\alpha_k-2}|z-\Tc(x_{k})|^{-2\alpha_k}\d z.
\end{split}
\end{equation*}
Similarly, using that $ |z-y_1| \leq  |z-y_2| +|y_{1}-y_{2}| \leq   |z-y_2| +d \leq 2 |z-y_2|$, we obtain
\begin{equation*}
\begin{split}
|\Tc(x_{k})-y_1|^{2\alpha_k}\int_{B(\Tc(x_{k}),2d)} |z-y_1|^{-1}&|z-y_2|^{-1}|z-\Tc(x_{k})|^{-2\alpha_k}\d z
\\
&\leq C \int_{B(\Tc(x_{k}),2d)}|y_1-z|^{2\alpha_k-2}|z-\Tc(x_{k})|^{-2\alpha_k}\d z.
\end{split}
\end{equation*}

Then we finally obtain, using that $2\alpha_k-2< 0$ and $|y_1-z|\geq 2d$,
\begin{equation*}
\begin{split}
 \int_{B(\Tc(x_{k}),2d)}|y_1-z|^{2\alpha_k-2}|z-\Tc(x_{k})|^{-2\alpha_k}\d z\leq C d ^{2\alpha_k-2}\int_{B(\Tc(x_{k}),2d)}
|z-\Tc(x_{k})|^{-2\alpha_k}\d z\leq C.
\end{split}
\end{equation*}

We now turn to the contributions of the integrals on the last domain $\Sigma_3:= D\setminus [B(\Tc(x_{k}),2d)\cup B(\tilde y,2d)]$.

We have on the one hand:
\begin{equation*}
\begin{split}
|\Tc(x_{k})-y_1|^{2\alpha_k-1}
 \int_{\Sigma_3} &|y_1-z|^{-1}|z-\Tc(x_{k})|^{-2\al_{k}}\d z\\
\leq& \int_{\Sigma_3\cap \{|y_1-z|\leq|z-\Tc(x_{k})| \}}  (2|z-\Tc(x_{k})|)^{2\alpha_k-1} |z-\Tc(x_{k})|^{-2\alpha_k}|y_1-z|^{-1}\d z\\
&  + \int_{\Sigma_3\cap \{|y_1-z|\geq|z-\Tc(x_{k})| \}} (2|y_1-z|)^{2\alpha_k-1} |z-\Tc(x_{k})|^{-2\alpha_k}|y_1-z|^{-1}\d z\\
\leq& C\int_{\Sigma_3\cap \{|y_1-z|\leq|z-\Tc(x_{k})| \}}  |z-\Tc(x_{k})|^{-1} |y_1-z|^{-1}\d z\\
&  +C \int_{\Sigma_3\cap \{|y_1-z|\geq|z-\Tc(x_{k})| \}} |y_1-z|^{2\alpha_k-2} |z-\Tc(x_{k})|^{-2\alpha_k}\d z\\
\leq& C\int_{D\setminus B(y_{1},d)}   |y_1-z|^{-2}\d z  +C \int_{D\setminus B(\Tc(x_{k}),2d)}  |z-\Tc(x_{k})|^{-2}\d z\\
\leq& C (1+ |\ln d|).
\end{split}
\end{equation*}
On the other hand, we use $|y_{1}-z|\leq d + |y_{2}-z| \leq 2 |y_{2}-z|$ and conversely $|y_{2}-z|\leq 2 |y_{1}-z|$ to compute
\begin{equation*}
\begin{split}
 |\Tc(x_{k})-y_1|^{2\alpha_k}\int_{\Sigma_3}& |z-y_1|^{-1}|z-y_2|^{-1}|z-\Tc(x_{k})|^{-2\alpha_k}\d z\\
\leq&\int_{\Sigma_3\cap \{|y_1-z|\leq|z-\Tc(x_{k})| \}}  (2|z-\Tc(x_{k})|)^{2\alpha_k} |z-y_1|^{-1}|z-y_2|^{-1} |z-\Tc(x_{k})|^{-2\alpha_k}\d z\\
& +   \int_{\Sigma_3\cap \{|y_1-z|\geq|z-\Tc(x_{k})| \}} (2|y_1-z|)^{2\alpha_k} |z-y_1|^{-1}|z-y_2|^{-1} |z-\Tc(x_{k})|^{-2\alpha_k} \d z\\
\leq & C\int_{D\setminus B(y_{1},d)}|y_1-z|^{-2}\d z +C\int_{D\setminus B(\Tc(x_{k}),2d)}  |z-\Tc(x_{k})|^{-2}\d z\\
&\leq C (1+|\ln d|).
\end{split}
\end{equation*}

Combining all the above estimates, we get the required results.

\bigskip

\noindent
{\bf Acknowledgement:} The first and the third authors are partially supported by the Project ``Instabilities in Hydrodynamics'' funded by Paris city hall (program ``Emergences'') and the Fondation Sciences Math\'ematiques de Paris. The second author is supported by the ANR projects GEODISP ANR-12-BS01-0015-01  and SCHEQ ANR-12-JS01-0005-01.

The authors are grateful to the anonymous referee for his/her valuable comments on the first version of this article which led to a substantial improvement of this work.

\adrese


\begin{thebibliography}{50}


\bibitem{ambrosio} Ambrosio L.,  Transport equation and Cauchy problem for non-smooth vector fields, {\it Calculus of variations and nonlinear partial differential equations}, \textbf{1-41}, Lecture Notes in Math., 1927, Springer, Berlin, 2008.

\bibitem{Adams} Adams R. A. and Fournier J. J. F., {\it Sobolev spaces. Second edition}, Pure and Applied Mathematics (Amsterdam), 140. Elsevier/Academic Press, Amsterdam, 2003.

\bibitem{Bardos} Bardos C., Existence et unicit\'e de la solution de l'equation d'Euler en dimension deux, {\it J. Math. Anal. Appl.} 40 (1972), 769--790.

\bibitem{BDT}  Bardos C., Di Plinio F. and Temam R., The Euler equations in planar nonsmooth convex domains, {\it J. Math. Anal. Appl.}, 407 (1):69-89, 2013.

\bibitem{Bernicot1} Bernicot F. and Hmidi T., On the global well-posedness for Euler equations with unbounded vorticity, preprint 2013. {\tt arXiv:1303.6151}

\bibitem{Bernicot2} Bernicot F. and Keraani S., On the global well-posedness of the 2D Euler equations for a large class of Yudovich type data, preprint 2012. {\tt arXiv:1204.6006}


\bibitem{Kondra} Borsuk M. and Kondratiev V. {\it Elliptic boundary value problems of second order in piecewise smooth domains. } North-Holland Mathematical Library, 69. Elsevier Science B.V., Amsterdam, 2006. 

\bibitem{Delort} Delort J.-M., Existence de nappes de tourbillon en dimension deux. (French) [Existence of vortex sheets in dimension two] {\it J. Amer. Math. Soc.} 4 (1991), no. 3, 553-586.

\bibitem{dip-lions} DiPerna R. J. and Lions P.-L., Ordinary differential equations, transport theory and Sobolev spaces, {\it Invent. Math.}, \textbf{98} (1989), 511--547.

\bibitem{DiPernaMajda} DiPerna R. J. and Majda A. J.,  Concentrations in regularizations for 2-D incompressible flow, {\it  Comm. Pure Appl. Math.} 40 (1987), no. 3, 301-345. 

\bibitem{Galdi} Galdi G. P., {\it An introduction to the mathematical theory of the Navier-Stokes equations. Steady-state problems. Second edition}, Springer Monographs in Mathematics. Springer, New York, 2011.

\bibitem{GL} G\'erard-Varet D. and Lacave C., The Two Dimensional Euler Equation on Singular Domains, {\it Arch. Ration. Mech. Anal.}, 209(1):131--170, 2013.

\bibitem{grisvard} Grisvard P., {\it Elliptic problems in nonsmooth domains}, Monographs and Studies in Mathematics, 24. Pitman (Advanced Publishing Program), Boston, MA, 1985.

\bibitem{kenig} Jerison D. and Kenig C.E, The inhomogeneous Dirichlet problem in Lipschitz domains, {\it J. Funct. Anal}. 130 (1995), no. 1, 161-219.

\bibitem{Kato} Kato T., On classical solutions of the two-dimensional non-stationary Euler equation, {\it Arch. Ration. Mech. Anal.}, 25(3):188--200, 1967.

\bibitem{CL} Lacave C., Uniqueness for Two Dimensional Incompressible Ideal Flow on  Singular Domains, preprint 2011. {\tt arXiv:1109.1153}


\bibitem{marchioro-pulvirenti} Marchioro C. and Pulvirenti M., {\it Mathematical Theory of Incompressible Nonviscous Fluids}, Springer-Verlag, New York, 1994.

\bibitem{Mazya} Kozlov V. A., Mazya  V. G. and  Rossmann, J. {\it Spectral problems associated with corner singularities of solutions to elliptic equations. } Mathematical Surveys and Monographs, 85. American Mathematical Society, Providence, RI, 2001. 


\bibitem{Pomm}  Pommerenke C., {\it Boundary behaviour of conformal maps}, Berlin New York: Springer-Verlag, 1992.

\bibitem{Stein} Stein E.M., {\it Harmonic Analysis: Real-Variable Methods, Orthogonality, and Oscillatory Integrals}, Princeton University Press, Princeton, NJ, 1993.

\bibitem{taylor} Taylor M., Incompressible fluid flows on rough domains. Semigroups of operators: theory and applications (Newport Beach, CA, 1998), 320-334, {\it Progr. Nonlinear Differential Equations Appl.}, 42, Birkhauser, Basel, 2000.

\bibitem{Temam} Temam R., On the Euler equations of incompressible perfect fluids, {\it J. Functional Analysis}, 20(1):32-43, 1975.

\bibitem{Vishik} Vishik M., Incompressible flows of an ideal fluid with vorticity in borderline spaces of Besov type, {\it Ann. Sci. \'Ecole Norm. Sup. (4)}, 32 (6):769-812, 1999.

\bibitem{wolibner} Wolibner W. , Un th\'eor\`eme sur l'existence du mouvement plan  d'un fluide parfait homog\`ene, incompressible, pendant un temps infiniment long, {\it Math. Z.} \textbf{37} (1933), no. 1, 698-726.

\bibitem{yudo} Yudovich V. I., Non-stationary flows of an ideal incompressible fluid, {\it Z. Vycisl. Mat. i Mat. Fiz.} 3 (1963), pp. 1032-1066 (in Russian). English translation in {\it USSR Comput. Math. $\&$ Math. Physics} 3 (1963), pp. 1407--1456.

\bibitem{yudo2} Yudovich, V. I., Uniqueness theorem for the basic nonstationary problem in the dynamics of an ideal incompressible fluid, {\it Math. Res. Lett.} 2 (1995), no. 1, 27--38.

\end{thebibliography}
\end{document}